\newtheorem{thm}{Theorem}[section]
\newtheorem{cor}[thm]{Corollary}
\newtheorem{lem}[thm]{Lemma}
\theoremstyle{definition}
\theoremstyle{definition}
\newtheorem{rem}[thm]{Remark}
\theoremstyle{definition}
\newcommand{\bE}{{\mathbb E}}
\newcommand{\bP}{{\mathbb P}}
\newcommand{\cT}{{\mathcal T}}
\newcommand{\citet}[1]{\cite{#1}}
\newcommand{\citep}[1]{\cite{#1}}
\begin{document}

\thispagestyle{plain}
\title{
Distribution of  branch lengths and phylogenetic diversity under homogeneous speciation models}
\author{Tanja Stadler$^1$ \& Mike Steel$^2$\\
\date{\today}
{\small $^1$ Institut f\"{u}r Integrative Biologie, ETH Z\"{u}rich } \\ 
{\small Universit\"{a}tsstr. 16, 8092 Z\"{u}rich, Switzerland} \\ 
{\small Phone +41 44 632 45 48, Fax +41 44 632 12 71, tanja.stadler@env.ethz.ch}\\
{\small $^2$ Biomathematics Research Centre} \\ 
{\small University of Canterbury, Christchurch, New Zealand} \\ 
{\small mike.steel@canterbury.ac.nz}\\
}
\maketitle
\doublespacing

\begin{abstract}
The constant rate birth--death process is a popular null model for speciation and extinction. If one removes extinct  and non-sampled lineages, this process induces  `reconstructed trees' which describe the relationship between extant lineages.  We derive the probability density of the length of a randomly chosen pendant edge in a reconstructed tree. For the special case of a pure-birth process with complete sampling, we also provide the probability density of the length of an interior edge, of the length of an edge descending from the root, and of the diversity (which is the sum of all edge lengths). We show that the results depend on whether the reconstructed trees are conditioned on the number of leaves, the age, or both.
\end{abstract}

{\em Keywords:} phylogenetic tree, birth--death process, Yule model, branch length

\newpage

\section{Introduction}
The constant rate birth--death process is a widely-used null model for speciation and extinction \cite{Mooers1997, Nee2001}.
This model has been used to test the hypothesis of constant macroevolutionary rates and to quantify the rates of speciation \cite{Paradis1998,Pybus2000,Ricklefs2007,Stadler2011PNAS}. Despite its wide use, the process continues to reveal new and sometimes unexpected results: even the simple Yule model leads to a curious property highlighted in a recent paper \cite{Steel2010,Mooers2011}:  the expected length of a randomly chosen edge in a Yule tree is half of the expected waiting time until a speciation event occurs. In order to attribute such ``surprises'' in empirical data to the null model instead of trying to find further explanations, the null models need to be well understood. In this paper, we fully characterize the lengths of pendant edges in birth--death trees on extant species (so-called reconstructed trees), and improve our understanding of interior edge lengths in Yule trees.

We will first explain the concept of a reconstructed tree which was originally introduced in  \cite{Nee1994} (see also Fig. \ref{fig_one}). The birth--death process starts with a single species at time $x_0$ before the present. At all times until the present, each species has a constant rate $\hat{\lambda}$ of speciation and a constant rate $\hat{\mu}$ of extinction (with $0\leq \hat{\mu} \leq \hat{\lambda}$). Such a process induces a birth--death tree. At the present, each extant species is sampled with probability $f$. 
Throughout this paper, we prune extinct and non-sampled species in the birth--death tree, i.e. we consider the  birth--death tree which is induced by the sampled extant species. The tree without the extinct and non-sampled species is called the reconstructed tree, as empirical data typically  infers this reconstructed tree (unless fossil information is included).

We consider three different scenarios for stopping the process (i.e. defining the present): 
\begin{itemize}
\item Scenario (i):  we condition the process on having $n$ extant sampled species, or 
\item Scenario (ii):  we condition the process on having $n$ extant sampled species and age $x_1$ for the  most recent common ancestor of the extant species, or 
\item Scenario (iii): we condition the process on having age $x_1$ for the  most recent common ancestor of the extant species.
\end{itemize}

As the start of the process (the time of origin $x_0$ or the time of most recent common ancestor $x_1$) is a parameter of the birth--death model, we have to assume a prior distribution for the time of origin  when not conditioning the reconstructed trees on its age (Scenario (i)). We make the common assumption that the first species originated at any time $x_0$ in the past with uniform probability \cite{AlPo2005}. This is also called an improper prior on $(0,\infty)$. Conditioning the resulting reconstructed tree to have $n$ extant species yields a proper distribution for the time of origin \cite{Gernhard2008JTB}. 

A special case of the birth--death process is the Yule model
 \cite{Yule1924}, which is obtained by setting $\hat{\mu}=0$ and $f=1$.  
Under the Yule model, Scenario (i) is equivalent to stopping the process just before the $(n+1)$-th speciation event \cite{Stadler2010SystBiol}.

\begin{figure}[ht]
\begin{center}
\resizebox{10cm}{!}{
\includegraphics{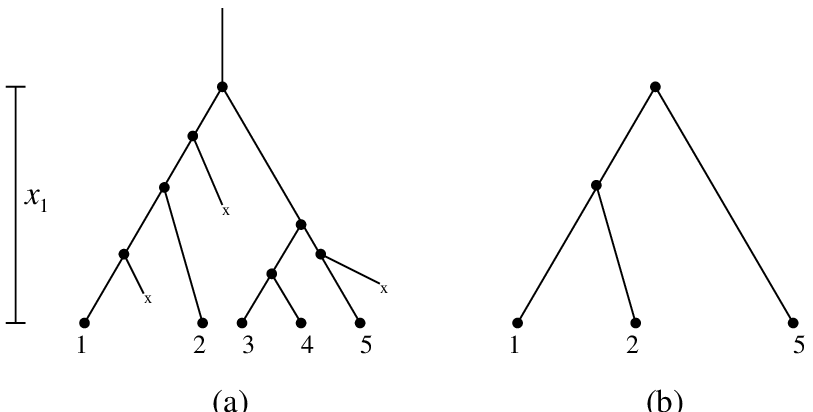}
}
\caption{(a): A birth--death tree starting with a single lineage, and with extinct lineages ending in `x'. The time from the extant lineages to their most recent common
ancestor is $x_1$. (b): The `reconstructed tree' obtained by deleting the initial single lineage and all the extinct and two non-sampled extant lineages (namely the two labelled 3 and 4).}
\label{fig_one}
\end{center}
\end{figure}

\begin{rem} \label{RemTrans}
In \cite{Stadler2009JTB}, 
it is shown that a birth--death process with parameters $\hat{\lambda}, \hat{\mu}, f$ under Scenario (i) or (ii) induces the same distribution on reconstructed trees as a birth--death process with parameters $\lambda, \mu$ and complete extant species sampling, where: $$\lambda=f \hat{\lambda}, \mu = \hat{\mu} - \hat{\lambda}(1-f).$$ Thus, under Scenario (i) and (ii), we will state all birth--death model results as functions of the transformed variables $\lambda, \mu$ and complete sampling. We will further establish in this paper that the transformation also holds under Scenario (iii).
\end{rem}

In the following, we derive the probability density of the length of a randomly chosen  pendent edge in a reconstructed tree  generated by a constant rate birth--death process under Scenarios (i), (ii), and (iii). 
For the special case of a Yule model (with rate $\lambda$) under Scenario (i), we find that a randomly selected pendant  edge  (or interior edge) has an exponentially distributed length with parameter $2 \lambda$ (Corollary \ref{pendyulen} and Theorem~\ref{ThmIntn}). This result generalizes \cite{Steel2010}, where the expected length of a pendant  edge was calculated to be $1/(2 \lambda)$. 
For the Yule model (under Scenarios (i)-(iii)), we also derive  the probability density of the length of an edge descending from the root and the sum of all edges.

\section{Preliminaries}

We first present some notation and a preliminary result  that will be useful later.
A reconstructed tree on $n$ extant species has $n-1$ interior vertices at times $x_1> \ldots > x_{n-1}$ in the past (see Fig.~\ref{fig_two}).  We call the speciation event at time $x_k$ the $k$-th speciation event.
We say that a leaf $x$ of a reconstructed tree is {\em adjacent to} the $k$-th speciation event if $(v,x)$ is an arc of the tree, where $v$ is the vertex that corresponds to the $k$-th speciation event. For example, in Fig.~\ref{fig_two}
the left-most leaf of the tree is adacent to the second speciation event.   In \cite{Stadler2008MB}, the following result was established; we will provide a shorter and more direct proof here.

\begin{figure}[ht]
\begin{center}
\resizebox{10cm}{!}{
\includegraphics{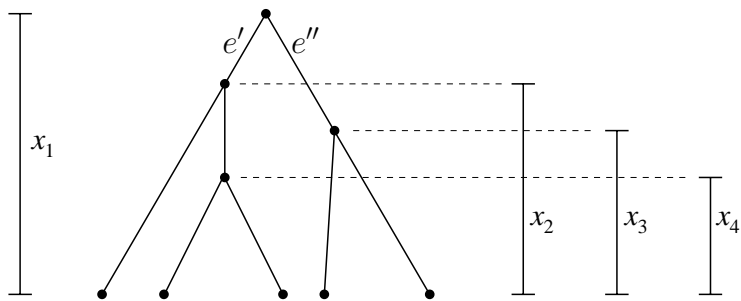}
}
\caption{For $k=1, \ldots, n-1$, $x_k$ denotes the time from the present into the past when the $k$--th speciation event occurred in a reconstructed birth-death tree.  The two edges incident with the root $e'$ and $e''$  (considered in Section \ref{rootsec}) are also shown, with $e'$ the shorter of the two. }
\label{fig_two}
\end{center}
\end{figure}

\begin{thm}
For a reconstructed Yule or  birth-death tree on $n\geq 2$ extant species, the probability under Scenario (i) or (ii) that a randomly-selected leaf is adjacent to the $k$-th speciation event  is:
\begin{eqnarray}
v(k) = \frac{2k}{n(n-1)}. \label{pk}
\end{eqnarray}
\end{thm}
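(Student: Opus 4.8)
The plan is to reduce everything to a single sequential construction of the ranked tree shape. Note first that $v(k)$ is determined by the reconstructed tree's \emph{ranked shape} alone --- its topology together with the order $x_1>\dots>x_{n-1}$ of interior vertices --- so only the distribution of that shape matters. By Remark~\ref{RemTrans} it is enough to work with a birth--death process with complete sampling, and I claim that under Scenario (i) or (ii) the ranked shape is generated by the rule: begin at the root with two lineages, and for $m=2,\dots,n-1$ let the $m$-th speciation event split a uniformly chosen one of the $m$ lineages then present (this covers the Yule case $\hat\mu=0$, $f=1$). The justification is the exchangeability of surviving lineages: just after the $(m-1)$-st speciation of the reconstructed tree there are $m$ surviving lineages whose future evolutions are i.i.d., and since the conditioning events (the total number of present-day descendants in Scenario (i), plus the root age $x_1$ in Scenario (ii)) are symmetric in these $m$ lineages, each is equally likely to be the one carrying the next reconstructed speciation. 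Because this makes the ranked shape independent of the speciation times, conditioning further on $x_1$ in Scenario~(ii) changes nothing.

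Given the construction, I would invoke the exchangeability of the $n$ leaves. Let $L_k\in\{0,1,2\}$ be the number of leaves adjacent to the vertex $v_k$ of the $k$-th speciation event; then a uniformly chosen leaf is adjacent to $v_k$ with probability $\bE[L_k]/n$, and by linearity of expectation $\bE[L_k]=2p_k$, where $p_k$ is the probability that a \emph{fixed} one of the two child edges of $v_k$ is pendant in the final tree. That child edge appears at event $k$ and stays pendant exactly when it is never the edge chosen to be split at any of the events $k+1,\dots,n-1$; at event $m$ there are $m$ lineages present, so conditionally on the edge still being alive it escapes selection with probability $(m-1)/m$. Hence $p_k=\prod_{m=k+1}^{n-1}\frac{m-1}{m}=\frac{k}{n-1}$, and $v(k)=\bE[L_k]/n=\frac{2k}{n(n-1)}$; the boundary values $k=1$, $k=n-1$ and the case $n=2$ check against this.

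The telescoping product and the linearity step are routine; the real content --- and the place I expect the written proof to be most careful --- is the first paragraph, namely that for \emph{both} the Yule and the full birth--death model, and under \emph{both} Scenarios (i) and (ii), the ranked tree shape obeys the uniform sequential-splitting rule, with conditioning on $x_1$ causing no distortion. A reader who prefers to avoid the exchangeability argument could instead quote the known closed form for the density of a reconstructed birth--death tree and observe that its combinatorial factor is uniform over ranked shapes.
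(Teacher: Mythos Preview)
Your argument is correct, and it is essentially the time-reversed dual of the paper's proof rather than the same proof. The paper invokes the equivalence of the ranked-shape distribution under the Yule model, the reconstructed birth--death model, and Kingman's coalescent (citing \cite{Aldous2001}), and then works \emph{backward} in time: a fixed leaf is adjacent to the $k$-th speciation event precisely when it avoids coalescing at each of the first $n-k-1$ coalescence steps and then coalesces at the next one, giving
\[
\prod_{j=1}^{n-k-1}\Bigl(1-\frac{n-j}{\binom{n-j+1}{2}}\Bigr)\cdot\frac{k}{\binom{k+1}{2}}
=\frac{2k}{n(n-1)}.
\]
You instead work \emph{forward} in time via the sequential uniform-splitting description and compute the probability that a fixed child edge of $v_k$ survives as a pendant through the remaining splits, obtaining the telescoping product $\prod_{m=k+1}^{n-1}\tfrac{m-1}{m}=\tfrac{k}{n-1}$; the extra bookkeeping $v(k)=\bE[L_k]/n=2p_k/n$ is painless. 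The two calculations are mirror images: ``leaf avoids coalescing until step $n-k$'' versus ``edge avoids splitting from step $k+1$ onward''.

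What each approach buys: the paper's coalescent route makes the event of interest (a \emph{specified} leaf being adjacent to $v_k$) immediately identifiable as a single coalescent event, so no $\bE[L_k]/n$ averaging step is needed. Your route has the advantage that the justification of the uniform-splitting rule via exchangeability of surviving lineages is self-contained and does not require importing the coalescent equivalence from the literature; it also makes explicit why conditioning on $x_1$ in Scenario~(ii) is harmless, a point the paper handles only by citing the model equivalence. Your closing remark --- that one could alternatively read off uniformity over ranked shapes from the known joint density of the reconstructed tree --- is exactly the alternative the paper gestures at with its citation.
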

\begin{proof}
Our proof is based on the equivalence of the probability distributions on tree topologies under three models -- the Yule process, the reconstructed birth--death tree and the coalescent tree (if one ignores branch lengths) \cite{Aldous2001}.  Observe that leaf $L$ is adjacent to the $k$-th speciation event vertex in the
Yule tree if and only if $L$ fails to coalesce for the first $n-k-1$ coalescence events, but does so on
the next coalecence event (when  $k+1$ points are available to coalesce and $k$ of these possible ${k+1 \choose 2}$ pairs involve $L$). The probability of this, under the coalescent process, is precisely:
$$\prod_{j=1}^{n-k-1}\left[1-\frac{n-j}{{n-j+1 \choose 2}}\right] \frac{k}{{k+1 \choose 2}}$$
Now, expansion and simplication (cancellation) of this product yields $v(k) = \frac{2k}{n(n-1)}$.
\end{proof}
The following expressions will be useful later. 
\noindent
For $0 \leq \mu \leq \lambda$, we define,
\begin{equation}
\label{p0eq}
p_0(s):=
\begin{cases}
 \frac{(1-e^{-(\lambda-\mu)s})}{\lambda-\mu e^{-(\lambda-\mu)s}}, & \mbox{ if } \mu< \lambda; \\ \frac{ s}{1+\lambda s},\, & \mbox{ if  }\mu = \lambda;
\end{cases}
\end{equation}
and 
\begin{equation}
\label{p1eq}
p_1(s):=
\begin{cases}
 \frac{ (\lambda-\mu)^2 e^{-(\lambda-\mu)s}}{(\lambda - \mu e^{-(\lambda-\mu)s})^2}, & \mbox{ if } \mu< \lambda; \\  \frac{1}{(1+\lambda s)^2},\, & \mbox{ if  }\mu = \lambda.
\end{cases}
\end{equation}

The significance of these quantities is that the probability of a lineage producing $0$ (resp. $1$) offspring after time $s$ is $\mu p_0(s)$ (resp.  $p_1(s)$) \cite{Kendall1949}.
Note that, for Yule trees, we have: $$p_0(s)= (1-e^{-\lambda s})/\lambda; \mbox{ and }  p_1(s) = e^{-\lambda s}.$$

\section{Length of a pendant edge in  birth--death and Yule trees}
In this section, we calculate the probability density function (pdf) of the length of a random pendant edge in a reconstructed tree. For  Scenario (i), we use this to derive the expected length of such an edge; moreover we calculate the pdf of the length of a random {\em interior} edge for a Yule model.
For Scenario (ii), we use the pdf to calculate the expected length of a random pendant edge.  For Scenario (iii), we additionally establish a general transformation equivalence between models (Theorem~\ref{transformthm}).

\subsection*{(i) Conditioning on $n$}

For pendant edges, the following result was established in \cite{Mooers2011}.

\begin{thm} \label{Thmpendn}
The length of a randomly selected pendant edge in a reconstructed birth--death tree on $n$ extant species has probability density function:
$$f_p(s|n) = 2 \lambda  p_1(s) (1-\lambda p_0(s)),$$
and expectation,
$$\bE[p|n] =\frac{ \mu + (\lambda - \mu) \log(1-\mu / \lambda)}{\mu^2} .$$
\end{thm}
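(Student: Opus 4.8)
The plan is to compute the pdf of a random pendant edge length by conditioning on which speciation event the chosen leaf is adjacent to, using the distribution $v(k)=2k/(n(n-1))$ from Theorem~\ref{pk}, together with the known joint density of the speciation times $x_1>\dots>x_{n-1}$ under the (transformed) birth--death model with complete sampling. First I would recall that, conditional on $n$ extant species, the ordered speciation times have a tractable joint density; the key fact is that a leaf adjacent to the $k$-th speciation event has pendant edge length exactly $x_k$ (the time from the present back to that vertex). So the pdf of a random pendant edge is $f_p(s\mid n)=\sum_{k=1}^{n-1} v(k)\, g_k(s)$, where $g_k$ is the marginal density of $x_k$. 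I would then substitute the known form of $g_k(s)$ (expressible through $p_0(s)$ and $p_1(s)$, since these encode the probability of a lineage leaving $0$ or $1$ descendants after time $s$), interchange the sum over $k$ with the weights $2k/(n(n-1))$, and simplify. The combinatorial identity needed is essentially a telescoping/binomial-coefficient cancellation analogous to the one appearing in the proof of Theorem~\ref{pk}; after it collapses, the $n$-dependence should drop out entirely, leaving $f_p(s\mid n)=2\lambda p_1(s)\bigl(1-\lambda p_0(s)\bigr)$.

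An alternative and perhaps cleaner route, which I would pursue in parallel, is a direct generational argument: pick a random leaf; its pendant edge has length $s$ iff the lineage ancestral to it underwent its most recent speciation at time $s$ in the past and then produced exactly one sampled descendant (itself) over that time interval, while the ``sister'' side produced at least one sampled descendant. The factor $p_1(s)$ is the probability that a lineage alive at time $s$ before the present leaves exactly one sampled descendant; the factor $(1-\lambda p_0(s))$ should emerge as the probability that the complementary clade is nonempty (i.e.\ the speciation event at time $s$ is ``visible'' in the reconstructed tree); and the leading $2\lambda$ comes from the rate of the speciation event together with the two ways of assigning the roles of the two children. This heuristic has the advantage of making the appearance of $p_0,p_1$ transparent, but turning it into a rigorous derivation requires care about what ``random pendant edge'' means (uniform over the $n$ leaves) and about the conditioning on exactly $n$ sampled species; I would reconcile it with the first approach as a consistency check.

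For the expectation, I would integrate $s\, f_p(s\mid n)$ over $s\in(0,\infty)$, i.e.\ compute $\bE[p\mid n]=2\lambda\int_0^\infty s\, p_1(s)\,(1-\lambda p_0(s))\,ds$. Substituting the explicit rational-exponential forms of $p_0$ and $p_1$ from \eqref{p0eq} and \eqref{p1eq} (for the generic case $\mu<\lambda$), the integrand becomes a rational function of $e^{-(\lambda-\mu)s}$ times $s$; the substitution $u=e^{-(\lambda-\mu)s}$ (so $s=-\log u/(\lambda-\mu)$) turns this into an integral of the form $\int_0^1 (\log u)\cdot(\text{rational in }u)\,du$, which is elementary and yields dilogarithm-free closed form because the relevant partial-fraction pieces integrate to logarithms. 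Collecting terms should produce $\bE[p\mid n]=\bigl(\mu+(\lambda-\mu)\log(1-\mu/\lambda)\bigr)/\mu^2$; I would also check the Yule limit $\mu\to 0$ recovers $1/(2\lambda)$, consistent with \cite{Steel2010}, and note the case $\mu=\lambda$ by continuity.

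The main obstacle I anticipate is the combinatorial simplification in the first approach — verifying that $\sum_{k=1}^{n-1} v(k)\,g_k(s)$ genuinely collapses to an $n$-free expression. This requires knowing the marginal density $g_k(s)$ of the $k$-th speciation time in closed form and then matching it against the weights so that the binomial sum telescopes; getting the boundary terms ($k=1$ and $k=n-1$) right is where errors tend to creep in. If that route proves unwieldy, I would fall back entirely on the generational argument, which sidesteps the speciation-time density altogether, at the cost of a more delicate justification of the sampling conventions.
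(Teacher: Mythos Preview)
Your primary approach is the right one and coincides with the argument the paper relies on: the paper does not reprove Theorem~\ref{Thmpendn} but cites \cite{Mooers2011}, and the proof there (as the ``similar'' proof of Theorem~\ref{Thmfpnx1} makes clear) is precisely the decomposition $f_p(s\mid n)=\sum_{k=1}^{n-1} v(k)\,g_k(s)$ followed by a binomial collapse that eliminates the $n$--dependence. Your expectation computation via the substitution $u=e^{-(\lambda-\mu)s}$ is also the standard route.

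One point worth flagging for when you fill in the details: under Scenario~(i) the speciation times $x_1>\cdots>x_{n-1}$ are \emph{not} the order statistics of $n-1$ i.i.d.\ variables with density $\lambda p_1(s)$; rather (see \cite{Gernhard2008JTB}) they are distributed as the $n-1$ smallest of $n$ i.i.d.\ such variables, so that
\[
g_k(s)=\frac{n!}{(n-1-k)!\,k!}\,\lambda p_1(s)\,(\lambda p_0(s))^{n-1-k}(1-\lambda p_0(s))^{k}.
\]
With this correct $g_k$ the sum $\sum_k \frac{2k}{n(n-1)} g_k(s)$ reduces via the binomial theorem to $2\lambda p_1(s)(1-\lambda p_0(s))$, and the $n$ indeed drops out; using the naive $(n-1)$--sample order statistic would leave a spurious $n$--dependence. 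This is exactly the ``boundary term'' subtlety you anticipated.
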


Here we note the following direct consequence of this result for the Yule model (obtained by setting $\mu=0$).

\begin{cor} \label{pendyulen}
The length of a randomly picked pendant edge in a tree on $n$ leaves under the Yule model with rate $\lambda$ is exponentially distributed with rate $2 \lambda$.
\end{cor}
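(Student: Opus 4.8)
The plan is to obtain this directly from Theorem~\ref{Thmpendn} by specializing to $\mu = 0$, which is exactly the Yule case: recall that the Yule model is the birth--death model with $\hat\mu = 0$, $f = 1$, and that under Scenario~(i) the transformed parameters are then $\lambda = \hat\lambda$ and $\mu = 0$. So the first step is simply to write down the pdf from Theorem~\ref{Thmpendn}, $f_p(s|n) = 2\lambda\, p_1(s)\,(1 - \lambda p_0(s))$, and substitute the Yule-specific forms of $p_0$ and $p_1$ recorded at the end of Section~2, namely $p_0(s) = (1 - e^{-\lambda s})/\lambda$ and $p_1(s) = e^{-\lambda s}$.

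The second step is the one-line computation: $1 - \lambda p_0(s) = 1 - (1 - e^{-\lambda s}) = e^{-\lambda s}$, so that $f_p(s|n) = 2\lambda\, e^{-\lambda s}\cdot e^{-\lambda s} = 2\lambda\, e^{-2\lambda s}$, which is precisely the density of an exponential random variable with rate $2\lambda$. I would also remark, as a sanity check and because it is the \emph{surprise} the introduction alludes to, that this density is independent of $n$, that $\int_0^\infty 2\lambda e^{-2\lambda s}\,ds = 1$ confirms it is a genuine pdf, and that its mean $1/(2\lambda)$ recovers the earlier result of~\cite{Steel2010}.

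There is no real obstacle here: the corollary is a pure substitution into an already-established formula. The only point requiring a little care is to use the $\mu = 0$ specializations of $p_0$ and $p_1$ rather than naively setting $\mu = 0$ in the $\mu < \lambda$ branch of~\eqref{p0eq}--\eqref{p1eq} — though in fact both routes agree here, since $(1 - e^{-\lambda s})/(\lambda - 0) = (1-e^{-\lambda s})/\lambda$ and $\lambda^2 e^{-\lambda s}/\lambda^2 = e^{-\lambda s}$. As an independent cross-check one could also verify consistency with the expectation formula of Theorem~\ref{Thmpendn}: as $\mu \to 0^+$, a second-order Taylor expansion of $\log(1-\mu/\lambda)$ gives $\frac{\mu + (\lambda-\mu)\log(1-\mu/\lambda)}{\mu^2} \to \frac{1}{2\lambda}$, matching the mean of the claimed exponential distribution.
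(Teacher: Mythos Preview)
Your proposal is correct and follows exactly the route the paper takes: the corollary is stated as a direct consequence of Theorem~\ref{Thmpendn} obtained by setting $\mu=0$, and your substitution $p_1(s)=e^{-\lambda s}$, $1-\lambda p_0(s)=e^{-\lambda s}$ giving $f_p(s|n)=2\lambda e^{-2\lambda s}$ is precisely the intended computation. The additional sanity checks (normalization, mean $1/(2\lambda)$, and the $\mu\to 0$ limit of the expectation formula) are sound and welcome but go beyond what the paper records.
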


We now establish a corresponding result for a randomly selected interior edge in a Yule tree (again under Scenario (i)).

\begin{thm} \label{ThmIntn}
The length of a randomly selected interior edge in a Yule (rate $\lambda$) tree on $n$ leaves is exponentially distributed with rate $2 \lambda$.
\end{thm}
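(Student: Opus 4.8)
The plan is to reduce this statement to the pendant-edge result of Corollary~\ref{pendyulen}. A reconstructed Yule tree on $n\ge 3$ leaves has $n-2$ interior edges, and each of them is the unique edge entering one of the speciation vertices $v_2,\dots,v_{n-1}$ (the root vertex $v_1$ carries no incoming edge in the reconstructed tree), so choosing a uniformly random interior edge amounts to choosing $k$ uniformly from $\{2,\dots,n-1\}$ and taking the edge $e_k$ entering $v_k$. If $e_k$ has an exponential length with rate $2\lambda$ for \emph{every} such $k$, then the length of a random interior edge is a mixture of identical laws and is therefore also exponential with rate $2\lambda$, which is the theorem. To describe $e_k$, write $W_i$ for the duration of the period during which the tree has exactly $i$ co-existing lineages ($i=2,\dots,n$); recall that under Scenario~(i) these are independent with $W_i\sim\mathrm{Exp}(i\lambda)$ and $x_j=W_{j+1}+\cdots+W_n$. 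The vertex $v_k$ is born at the speciation event, of some index $b(k)<k$, that created the lineage which subsequently speciates at the $k$-th event, so $e_k$ has length $x_{b(k)}-x_k=W_{b(k)+1}+\cdots+W_k$.

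The heart of the argument is the claim that $e_k$ has exactly the length distribution of a uniformly chosen pendant edge in a Yule tree on $k$ leaves. I would prove this by stopping the Yule process just after the $(k-1)$-th speciation: at that instant there are $k$ lineages, spanning the Yule tree on $k$ leaves built by the first $k-1$ speciations, each pendant lineage carrying the age it has at time $x_{k-1}$. By the exchangeability of Yule lineages and the independence of tree topology from the inter-event waiting times, the lineage that speciates at the next ($k$-th) event is a uniformly random one of these $k$ lineages, chosen independently of that $k$-leaf subtree and of the remaining waiting time $W_k\sim\mathrm{Exp}(k\lambda)$ until that event. Appending $W_k$ to every pendant lineage turns the subtree into a genuine (Scenario~(i)) Yule tree on $k$ leaves --- this is just the ``stop before the next speciation'' description of Scenario~(i) --- and under this operation the pendant edge of the selected lineage becomes an edge of length $(x_{b(k)}-x_{k-1})+W_k=x_{b(k)}-x_k$, i.e.\ the length of $e_k$. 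Since the selected lineage is a uniformly random tip, $e_k$ is distributed as a randomly picked pendant edge of a $k$-leaf Yule tree, hence as $\mathrm{Exp}(2\lambda)$ by Corollary~\ref{pendyulen}; averaging over $k$ finishes the proof.

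The step I expect to require the most care is this distributional identity: one must check that the first $k-1$ speciations, together with the uniform choice of the next speciating lineage and the independent extra waiting time $W_k$, genuinely reproduce the joint law of a Scenario~(i) Yule tree on $k$ leaves, and that conditioning the original process on $n$ leaves does not perturb the law of these first $k$ events. A more computational alternative that sidesteps this: first show $\bP[b(k)=j]=\tfrac{2j}{k(k-1)}$ by the coalescent argument already used for $v(k)$ above (in the $k$-leaf subtree, $v_k$'s lineage is a uniformly random leaf, adjacent to its $j$-th speciation), and then verify by direct convolution that $\sum_{j=1}^{k-1}\tfrac{2j}{k(k-1)}$ times the density of $W_{j+1}+\cdots+W_k$ collapses to $2\lambda e^{-2\lambda s}$; the sum telescopes just as in the pendant-edge computation behind Theorem~\ref{Thmpendn}. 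Either route produces the rate $2\lambda$ with no input beyond the pendant-edge result.
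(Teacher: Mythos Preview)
Your argument is correct and is essentially the paper's own proof, just unwound: the paper proceeds by induction on $n$, observing at the step $k\to k+1$ that cutting the tree at time $x_k$ yields a Scenario~(i) Yule tree $\cT_k$ on $k$ leaves whose interior edges are covered by the inductive hypothesis, while the one new interior edge in $\cT_{k+1}$ is the pendant edge of $\cT_k$ that speciates next --- a uniformly random pendant edge, hence $\mathrm{Exp}(2\lambda)$ by Corollary~\ref{pendyulen}. Your version applies this same observation directly to each $e_k$ and replaces the induction by the uniform mixture over $k\in\{2,\dots,n-1\}$; the content is identical.
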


\begin{proof}
We will establish this theorem by induction on $n$. For $n=3$, we have one  interior edge. The waiting time between the first speciation event (yielding two species) and the second speciation event (yielding three species) is  the length of the interior edge.  This waiting time is the time until the first of the two species speciates. As each species has a speciation rate $\lambda$, this waiting time is an exponential distribution with rate $2 \lambda$.

Now assume we established the theorem for $n=k$.
For establishing the theorem for $k+1$, we first note that the Yule tree on $k+1$ species with the uniform prior for the time of origin corresponds to  a Yule process forward in time which is stopped just before the $(k+1)$-th speciation event \cite{Stadler2010SystBiol}.
Now cut off the tree on $k+1$ leaves $\cT_{k+1}$ at time $x_k$, yielding a Yule tree on $k$ leaves $\cT_k$. A randomly selected nterior edge $\cT_k$ has an exponentially distributed length with rate $2 \lambda$ (induction assumption). Each interior edge in $\cT_k$ is also an interior edge in $\cT_{k+1}$. Additionally, in $\cT_{k+1}$, one of the pendant edges becomes an interior edge (the edge which speciates at time $x_k$). As  each  randomly selected pendant edge in $\cT_k$ has an exponentially distributed length with rate $2 \lambda$ (Corollary \ref{pendyulen}), the new interior edge in $\cT_{k+1}$ also has an exponentially distributed length with rate $2 \lambda$. This yields that a randomly picked interior edge in $\cT_{k+1}$ has an exponentially distributed length with rate $2 \lambda$, and thereby establishes the induction step.
\end{proof}

\subsection*{(ii) Conditioning on $n$ and $x_1$}
In order to derive the edge length distribution, we need the following lemma.
\begin{lem}
The probability density function for the time of the $k$-th speciation event in a reconstructed birth--death tree, conditional on having $n\geq 2$ extant species and on the first speciation event being at time $x_1$ is:
\begin{equation*}
f_{n,k} (s|n,x_1) = (n-2) {n-3 \choose k-2} G(s|x_1)^{n-k-1} 
(1-G(s|x_1))^{k-2} g(s|x_1), k=2,\ldots n-1, 
\end{equation*}
where
\begin{equation*}
g(s|x_1) = \frac{p_1(s)}{p_0(x_1)}, \qquad G(s|x_1) = \frac{p_0(s)}{p_0(x_1)} ,
\end{equation*}
with $p_0$ and $p_1$ the functions described in Eqns. (\ref{p0eq}) and (\ref{p1eq}). 
\end{lem}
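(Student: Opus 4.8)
The plan is to reduce the statement to a textbook order-statistics computation once the joint law of the branching times is available. First I would invoke the known description of a reconstructed birth--death tree under Scenario~(ii), expressed in the transformed variables $\lambda,\mu$ of Remark~\ref{RemTrans}: conditional on there being $n\geq 2$ extant species and on the most recent common ancestor living at time $x_1$, the remaining $n-2$ speciation times $x_2,\dots,x_{n-1}$ are independent and identically distributed on the interval $(0,x_1)$, each having density $g(s\mid x_1)=p_1(s)/p_0(x_1)$ and cumulative distribution function $G(s\mid x_1)=p_0(s)/p_0(x_1)$; see \cite{Gernhard2008JTB, Stadler2009JTB}. To confirm that $G(\,\cdot\mid x_1)$ really is the distribution function of $g(\,\cdot\mid x_1)$ I would check directly from Eqns.~(\ref{p0eq})--(\ref{p1eq}) that $p_0'(s)=p_1(s)$ in both cases $\mu<\lambda$ and $\mu=\lambda$; this is a one-line differentiation and it gives $\frac{d}{ds}G(s\mid x_1)=g(s\mid x_1)$, $G(0\mid x_1)=0$, and $G(x_1\mid x_1)=1$, so the normalisation is automatic.

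Next I would translate ``the time of the $k$-th speciation event'' into a specific order statistic. Let $Y_1,\dots,Y_{n-2}$ be the i.i.d.\ sample just described, with order statistics $Y_{(1)}<\cdots<Y_{(n-2)}$. Because the speciation times obey $x_1>x_2>\cdots>x_{n-1}$ and $x_1$ is held fixed, we have $x_k=Y_{(n-k)}$ for $k=2,\dots,n-1$; that is, $x_k$ is the $(n-k)$-th smallest of the $n-2$ samples. Substituting $m=n-2$, $j=n-k$, $F=G(\,\cdot\mid x_1)$ and $f=g(\,\cdot\mid x_1)$ into the standard formula $\frac{m!}{(j-1)!\,(m-j)!}F(s)^{j-1}(1-F(s))^{m-j}f(s)$ for the density of the $j$-th smallest of $m$ i.i.d.\ variables yields $\frac{(n-2)!}{(n-k-1)!\,(k-2)!}\,G(s\mid x_1)^{\,n-k-1}(1-G(s\mid x_1))^{k-2}g(s\mid x_1)$, and rewriting the constant as $(n-2)\binom{n-3}{k-2}$ gives exactly the asserted pdf. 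The extreme cases $k=2$ and $k=n-1$ are then just the densities of the maximum and the minimum of the sample.

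The only genuine obstacle is the first step: showing, rather than merely citing, that under Scenario~(ii) the $n-2$ non-root branching times are i.i.d.\ with density $g(\,\cdot\mid x_1)$. If a self-contained derivation is wanted, I would start from the likelihood of a reconstructed birth--death tree with complete sampling, which, as a function of the branching times with the root time $x_1$ held fixed, is proportional to $\prod_{i=2}^{n-1}p_1(x_i)$ on the ordered simplex $x_1>x_2>\cdots>x_{n-1}>0$; since $\int_0^{x_1}p_1(s)\,ds=p_0(x_1)$ (again by $p_0'=p_1$), the normalising constant equals $(n-2)!\,p_0(x_1)^{-(n-2)}$, and symmetrising over the $(n-2)!$ orderings of $x_2,\dots,x_{n-1}$ exhibits them as $n-2$ i.i.d.\ draws from $g(\,\cdot\mid x_1)$, after which the order-statistic step applies verbatim. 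A small point to keep straight in that step is the indexing convention --- decreasing speciation times versus increasing order-statistic rank --- which is precisely what converts ``$(n-k)$-th smallest'' into the exponents $n-k-1$ on $G$ and $k-2$ on $1-G$.
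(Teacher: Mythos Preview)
Your proposal is correct and follows essentially the same route as the paper: cite the result of \cite{Gernhard2008JTB} that, conditional on $n$ and $x_1$, the non-root speciation times are i.i.d.\ with density $g(\,\cdot\mid x_1)$ and distribution $G(\,\cdot\mid x_1)$, and then read off $f_{n,k}$ from the standard order-statistic density formula with the appropriate index shift. Your added verification that $p_0'=p_1$ (so $G$ really is the cdf of $g$) and your sketch of a self-contained derivation of the i.i.d.\ property are a bit more explicit than the paper, but the argument is the same.
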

\begin{proof}
In \cite{Gernhard2008JTB}, we established that the speciation times  $x_2> \ldots > x_{n-1} $ are the order statistics of $n-2$ i.i.d. random variables $s_2,\ldots,s_{n-2}$ with probability density $g(s|x_1)$.
For the distribution,  by integrating  $g(s|x_1)$ with respect to $s$ we obtain the function $G(s|x_1)$.
Now, $x_{k+1}$ is the $k$-th order statistic (with the first order statistic being the largest and the $(n-2)$-th order statistic being the smallest value), thus we have the probability density function for $x_k$ (e.g. \cite{DehlingStochastik}):
$$f_{n,k+1} (s|t) = (n-1-k) {n-2 \choose n-1-k} G(s|t)^{n-k-2} 
(1-G(s|t))^{k-1} g(s|t). $$
Equivalently:
$$f_{n,k+1} (s|t) = (n-2) {n-3 \choose k-1} G(s|t)^{n-k-2} 
(1-G(s|t))^{k-1} g(s|t). $$
\end{proof}

\begin{thm} \label{Thmfpnx1}
The length of a randomly  selected pendant edge in a reconstructed birth--death tree on $n$ extant species and age $x_1$  has the probability density function:
$$f_p(s|n,x_1) = 2  \frac{ (n-2)}{n(n-1)}  \frac{p_1(s)}{p_0(x_1)} \left( (n-1)  - (n-3) \frac{p_0(s)}{p_0(x_1)} \right) $$
for $s<x_1$ and:
$$f_p(x_1|n,x_1) = \frac{2}{n(n-1)} \delta(0),$$
where $\delta$ is the Dirac delta function.
\end{thm}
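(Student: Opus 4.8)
The plan is to decompose a randomly chosen pendant edge according to which speciation event its leaf is adjacent to. Choosing a pendant edge uniformly at random is the same as choosing a leaf $L$ uniformly at random, and by the definition of adjacency, if $L$ is adjacent to the $k$-th speciation event then the parent of $L$ is the vertex at time $x_k$; since leaves sit at the present, the corresponding pendant edge has length exactly $x_k$. Thus, writing $W$ for the length of the selected pendant edge and $K$ for the (random) index of the speciation event that $L$ is adjacent to, we have $W=x_K$.

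By (\ref{pk}) --- which holds under Scenario (ii) --- we have $\bP(K=k)=v(k)=\frac{2k}{n(n-1)}$ for $k=1,\dots,n-1$, and $\sum_{k=1}^{n-1}v(k)=1$. The event $\{K=k\}$ is a property of the \emph{ranked} tree topology together with the choice of $L$ alone. For a reconstructed birth--death tree conditioned on $n$ and on $x_1$, the ranked topology is independent of the ordered speciation times $x_2>\cdots>x_{n-1}$; this is exactly the ``order statistics of $n-2$ i.i.d.\ variables'' representation recalled in the proof of the preceding Lemma, following \cite{Gernhard2008JTB}. Consequently, the conditional density of $x_k$ given $\{K=k\}$ coincides with its marginal density $f_{n,k}(s\mid n,x_1)$ from that Lemma, for $k=2,\dots,n-1$. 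For $k=1$ the time $x_1$ is fixed by the conditioning, so this term contributes a point mass of weight $v(1)=\frac{2}{n(n-1)}$ at $s=x_1$; this is the $\frac{2}{n(n-1)}\delta(0)$ term in the statement, and the remaining mass $1-\frac{2}{n(n-1)}$ is spread over $s<x_1$.

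For $s<x_1$ we therefore get
$$f_p(s\mid n,x_1)=\sum_{k=2}^{n-1}v(k)\,f_{n,k}(s\mid n,x_1)=\frac{2(n-2)}{n(n-1)}\,g(s\mid x_1)\sum_{k=2}^{n-1}k\binom{n-3}{k-2}G(s\mid x_1)^{\,n-k-1}\bigl(1-G(s\mid x_1)\bigr)^{k-2}.$$
It remains to evaluate the inner sum. Setting $j=k-2$ and $m=n-3$ and abbreviating $G=G(s\mid x_1)$, it equals $\sum_{j=0}^{m}(j+2)\binom{m}{j}G^{m-j}(1-G)^{j}$, which splits as $\sum_{j=0}^{m}j\binom{m}{j}G^{m-j}(1-G)^{j}+2\sum_{j=0}^{m}\binom{m}{j}G^{m-j}(1-G)^{j}=m(1-G)+2$, using the mean of a $\mathrm{Binomial}(m,1-G)$ variable and the binomial theorem. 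Hence the inner sum is $(n-3)(1-G)+2=(n-1)-(n-3)G$, and substituting $g(s\mid x_1)=p_1(s)/p_0(x_1)$ and $G=p_0(s)/p_0(x_1)$ gives precisely the asserted density.

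The only genuinely substantive step is the independence of the ranked topology and the ordered speciation times, which is what lets us replace the conditional density of $x_k$ given $\{K=k\}$ by the unconditional $f_{n,k}$; everything afterwards is re-indexing plus the elementary binomial-mean identity. As a sanity check I would also verify the boundary cases $n=2$ (no continuous part, full unit mass at $x_1$) and $n=3$ (continuous part $\tfrac{2}{3}\,p_1(s)/p_0(x_1)$, atom of weight $\tfrac13$), which are consistent with $\sum_k v(k)=1$.
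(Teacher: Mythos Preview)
Your proof is correct and follows essentially the same approach as the paper: decompose according to the speciation event the leaf is adjacent to, separate the $k=1$ atom from the $k\ge 2$ continuous part, and simplify the resulting binomial sum. Your simplification via the $\mathrm{Binomial}(m,1-G)$ mean identity is a bit more direct than the paper's step-by-step binomial-coefficient manipulation, and your explicit appeal to the independence of the ranked topology and the ordered speciation times makes transparent a point the paper leaves implicit.
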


\begin{proof}
The proof is similar to the proof of Theorem \ref{Thmpendn} in \cite{Mooers2011}.
For $s=x_1$, the pendant edge must be attached to the root which has probability $2/(n(n-1))$ (Eqn. (\ref{pk})). The length of the pendant edge is $x_1$ with mass equal to $1$ which is formalized using the Dirac delta function. 

For $s<x_1$, we have: \begin{eqnarray*}
f_p(s|n,x_1) &=& \sum_{k=2}^{n-1} v(k) f_{n,k}(s|n,x_1)\\
&=& 2  \frac{ (n-2)}{n(n-1)} \sum_{k=2}^{n-1}  k {n-3 \choose k-2} G(s|x_1)^{n-k-1} (1-G(s|x_1))^{k-2} g(s|x_1)\\
&=& 2  \frac{ (n-2)}{n(n-1)} g(s|x_1) \sum_{k=0}^{n-3}  (k+2) {n-3 \choose k} G(s|x_1)^{n-k-3} (1-G(s|x_1))^{k} \\
&=& 2  \frac{ (n-2)}{n(n-1)} g(s|x_1) \left( 2+      \sum_{k=1}^{n-3} k {n-3 \choose k} G(s|x_1)^{n-k-3} (1-G(s|x_1))^{k} \right) \\
&=& 2  \frac{ (n-2)}{n(n-1)} g(s|x_1) \left( 2+      \sum_{k=0}^{n-4} (n-3) {n-4 \choose k} G(s|x_1)^{n-k-4} (1-G(s|x_1))^{k+1} \right) \\
&=& 2  \frac{ (n-2)}{n(n-1)} g(s|x_1) \left( 2+    (n-3) (1-G(s|x_1) \right)\\
&=& 2  \frac{ (n-2)}{n(n-1)} g(s|x_1) \left( (n-1)  - (n-3)G(s|x_1) \right) ,
\end{eqnarray*}
which establishes the theorem.
\end{proof}
Evaluating the first moment integral of $f_p(s|n,x_1)$ yields,
\begin{cor} \label{CorExpnx1}
For $0<\mu<\lambda$, the expected length of a pendant edge is:
\begin{eqnarray*}
 \bE[p|n,x_1]&=&\frac{1}{(n-1) n}\left(2 x_1+\frac{(n-2) }{p_0(x_1) \left(1-e^{-(\lambda-\mu) x_1}\right)} C \right),
 \end{eqnarray*}
 where
 \begin{eqnarray*}
C &=& 
\frac{ (n-3)p_0(x_1)}{ \lambda \mu}  (\lambda-\mu e^{-(\lambda-\mu) x_1} )-
\\ & & \frac {x_1 p_1(x_1)\left(\lambda-e^{-(\lambda-\mu) x_1} \mu\right)}{\lambda^2} \left(\frac{4}   {\lambda-\mu} \left(\lambda-e^{-(\lambda-\mu) x_1} \mu\right)  -e^{-(\lambda- \mu) x_1} (n+1)\right)-
\\ & & \frac{\log (1-\mu p_0(x_1))}{(\lambda \mu)^2} \left( e^{-(\lambda-\mu) x_1} \mu (-4 \mu-(n+1)(\lambda -\mu ))- \lambda (-4\lambda+(n+1)(\lambda -\mu ))\right).
\end{eqnarray*}
For $\mu=\lambda$, 
$$  \bE[p|n,x_1]= \frac{1}{(n-1) n}\left(2{x_1}+\frac{(n-2) }{x_1 p_0(x_1) \lambda^2} \left( (n-7){x_1}+ (n+1) p_0(x_1) +\frac{ 6-2n +4 {\lambda} {x_1} }{\lambda} \log [1+{\lambda} x_1] \right)\right).$$
\end{cor}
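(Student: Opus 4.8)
The plan is to integrate $s$ against the distribution of the pendant-edge length described in Theorem~\ref{Thmfpnx1}, splitting it into the atom at $s=x_1$ and the density on $(0,x_1)$. The atom has mass $v(1)=\frac{2}{n(n-1)}$ and contributes $\frac{2x_1}{n(n-1)}$ to the expectation, which is the first term of the claimed formula. The remaining contribution is
\[
\int_0^{x_1} s\, f_p(s\mid n,x_1)\,ds=\frac{2(n-2)}{n(n-1)\,p_0(x_1)}\left((n-1)\int_0^{x_1} s\,p_1(s)\,ds-\frac{n-3}{p_0(x_1)}\int_0^{x_1} s\,p_1(s)p_0(s)\,ds\right),
\]
so the corollary reduces to evaluating these two one-dimensional integrals.

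The key observation that keeps the computation manageable is the identity $p_1(s)=p_0'(s)$, valid in both cases $\mu<\lambda$ and $\mu=\lambda$ by directly differentiating $p_0$ in (\ref{p0eq}) and comparing with (\ref{p1eq}). Integrating by parts then gives $\int_0^{x_1} s\,p_1(s)\,ds = x_1 p_0(x_1)-\int_0^{x_1} p_0(s)\,ds$ and $\int_0^{x_1} s\,p_1(s)p_0(s)\,ds=\tfrac12 x_1 p_0(x_1)^2-\tfrac12\int_0^{x_1} p_0(s)^2\,ds$, so everything is expressed through the two elementary antiderivatives $\int_0^{x_1} p_0$ and $\int_0^{x_1} p_0^2$. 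For $\mu<\lambda$ these are computed by the substitution $w=e^{-(\lambda-\mu)s}$ followed by partial fractions, whose poles at $w=0$ and $w=\lambda/\mu$ produce a single logarithm; using the identity $1-\mu p_0(x_1)=(\lambda-\mu)/(\lambda-\mu e^{-(\lambda-\mu)x_1})$ this logarithm is rewritten as $\log(1-\mu p_0(x_1))$, the remaining pieces being rational in $e^{-(\lambda-\mu)x_1}$. For $\mu=\lambda$ one instead uses $p_0(s)=s/(1+\lambda s)$ and the substitution $w=1+\lambda s$, which makes both integrals elementary and yields the $\log(1+\lambda x_1)$ term.

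I expect the only real difficulty to be organizational: after substituting the closed forms back, multiplying out the prefactors $\frac{2(n-2)}{n(n-1)p_0(x_1)}$ and $\frac{1}{p_0(x_1)}$, and combining with the atom term, one must regroup the numerous rational and logarithmic pieces into exactly the compact quantity $C$ stated in the corollary. The practical way to control this is to carry everything in terms of $p_0(x_1)$, $p_1(x_1)$ and the recurring factor $\lambda-\mu e^{-(\lambda-\mu)x_1}$ from the outset, so that at the end the coefficients of $x_1 p_1(x_1)\bigl(\lambda-\mu e^{-(\lambda-\mu)x_1}\bigr)$ and of $\log(1-\mu p_0(x_1))$ line up with the stated expression; a symbolic-algebra cross-check confirms the bookkeeping. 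Finally, the $\mu=\lambda$ formula can be obtained either from the analogous direct computation sketched above or as the limit $\mu\to\lambda$ of the general expression for $C$.
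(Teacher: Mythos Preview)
Your proposal is correct and is exactly the approach the paper takes: the paper's entire proof is the single line ``Evaluating the first moment integral of $f_p(s|n,x_1)$ yields,'' so you are simply supplying the details of that integration. Your use of the identity $p_1(s)=p_0'(s)$ to reduce everything to $\int_0^{x_1}p_0$ and $\int_0^{x_1}p_0^2$ is a clean way to organize the calculation the paper leaves implicit.
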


Setting $\mu=0$ in Corollary \ref{CorExpnx1} yields Theorem 2 of \cite{Mooers2011}.

\subsection*{(iii) Conditioning on $x_1$}

\begin{lem} \label{Lempn}
The probability of a reconstructed birth--death tree with the first speciation event being at time $x_1$ having $n \geq 2$ extant descendants is,
$$p_n(x_1) = (n-1) \frac{p_1(x_1)^2 (\lambda p_0(x_1))^{n-2}}{(1-\mu p_0(x_1))^2}.$$
In particular, the probability $p_n(x_1) $ can be written as a function of the transformed parameters $\lambda,\mu$ instead of $\hat{\lambda},\hat{\mu},f$.
\end{lem}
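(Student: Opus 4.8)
The plan is to condition on the most recent common ancestor and decompose the reconstructed tree into the two maximal subtrees hanging off its root. By definition, conditioning on the first speciation event being at time $x_1$ means conditioning on the event that a birth occurs at time $x_1$ and that \emph{each} of the two lineages then present leaves at least one sampled extant descendant — otherwise the most recent common ancestor of the sampled species would be strictly younger than $x_1$. Since the two lineages evolve independently after the split, the numbers $I$ and $J$ of sampled extant descendants of the left and right subtrees are i.i.d., each distributed as the number of sampled descendants of a single lineage evolved for time $x_1$, conditioned on being at least $1$. Hence
\[
p_n(x_1)=\frac{\sum_{i=1}^{n-1} q_i(x_1)\,q_{n-i}(x_1)}{\big(\sum_{k\ge 1} q_k(x_1)\big)^2},
\]
where $q_k(t)$ denotes the probability that one lineage evolving for time $t$ leaves exactly $k$ sampled extant descendants.

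The first ingredient is an explicit formula for $q_k(t)$. Working with the transformed process (parameters $\lambda,\mu$, complete sampling), which by Remark~\ref{RemTrans} and its extension to Scenario (iii) produces the same reconstructed trees, classical birth--death theory \cite{Kendall1949} gives $q_0(t)=\mu p_0(t)$ and, for $k\ge 1$,
\[
q_k(t)=p_1(t)\,\big(\lambda p_0(t)\big)^{k-1},
\]
with $p_0,p_1$ as in Eqns.~(\ref{p0eq}) and~(\ref{p1eq}). The cases $k=0,1$ are precisely the quantities recorded in the Preliminaries; the geometric tail for $k\ge 2$ is Kendall's $(1-\alpha)(1-\beta)\beta^{\,k-1}$ form with $\alpha=\mu p_0(t)$, $\beta=\lambda p_0(t)$, together with the identity $p_1(t)=(1-\mu p_0(t))(1-\lambda p_0(t))$ (a one-line check from the definitions). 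Since $\lambda p_0(t)<1$, summing the geometric series yields the survival-and-sampling probability $\sum_{k\ge 1}q_k(t)=p_1(t)/(1-\lambda p_0(t))=1-\mu p_0(t)$.

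The rest is routine algebra. Substituting the formula for $q_k$ into the displayed expression for $p_n(x_1)$, the numerator becomes
\[
\sum_{i=1}^{n-1} p_1(x_1)^2\big(\lambda p_0(x_1)\big)^{i-1}\big(\lambda p_0(x_1)\big)^{n-i-1}=(n-1)\,p_1(x_1)^2\big(\lambda p_0(x_1)\big)^{n-2},
\]
and dividing by $(1-\mu p_0(x_1))^2$ gives the claimed expression. The ``in particular'' assertion is then immediate, since the right-hand side is built only from $p_0(x_1)$ and $p_1(x_1)$, which by Eqns.~(\ref{p0eq}) and~(\ref{p1eq}) depend on $\lambda,\mu$ alone.

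I expect the main obstacle to be not the combinatorial sum but the justification that the per-lineage sampled-offspring distribution $q_k(t)$ really has the stated $p_0,p_1$ form \emph{and} is invariant under the reparametrisation $(\hat\lambda,\hat\mu,f)\mapsto(\lambda,\mu)$; this is exactly where the transformation result (Remark~\ref{RemTrans}, established for Scenario (iii) in Theorem~\ref{transformthm}) does the real work. A secondary point requiring care is verifying that conditioning on the root age $x_1$ does not disturb the independence of the two subtrees used in the decomposition above.
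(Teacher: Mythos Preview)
Your proof is correct and follows essentially the same route as the paper: split at the root into two independent lineages, each conditioned to leave at least one sampled descendant, observe that the conditional offspring count of each lineage is geometric with the claimed $p_0,p_1$ form, and convolve. The paper derives the per-lineage conditional distribution by quoting the explicit $(\hat\lambda,\hat\mu,f)$-formula from \cite{Stadler2010JTB} and then simplifying via the substitution $\lambda=f\hat\lambda$, $\mu=\hat\mu-\hat\lambda(1-f)$; this makes the ``in particular'' clause a direct by-product of the computation, with no appeal to the general transformation theorem. Your version instead starts in the $(\lambda,\mu)$ world via Kendall's classical formula and invokes Theorem~\ref{transformthm} to cover the sampling, which is cleaner but reverses the paper's logical order (Theorem~\ref{transformthm} appears after this lemma); since Theorem~\ref{transformthm} is proved independently, there is no circularity, but be aware you are forward-referencing.
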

\begin{proof}
The probability of a single individual having $n$ extant and sampled offspring after time $x_1$, given that it has at least one offspring is \cite{Stadler2010JTB}:
$$ \hat{p}_n(x_1)= \frac{f (\hat{\lambda}- \hat{\mu})^2 e^{-(\hat{\lambda}- \hat{\mu}) x_1}}{ (f \hat{\lambda}+(\hat{\lambda}(1-f) -\hat{\mu}) e^{-(\hat{\lambda}- \hat{\mu}) x_1})^2  }  \left( \frac{f \hat{\lambda}(1- e^{-(\hat{\lambda}- \hat{\mu}) x_1}) }{f \hat{\lambda}+(\hat{\lambda}(1-f) -\hat{\mu}) e^{-(\hat{\lambda}- \hat{\mu}) x_1}}\right)^{n-1}  \frac{f \hat{\lambda}+(\hat{\lambda}(1-f) -\hat{\mu}) e^{-(\hat{\lambda}- \hat{\mu}) x_1}}{f (\hat{\lambda}- \hat{\mu})}$$
Using the transformation of the parameters $\hat{\lambda}$ and $\hat{\mu}$ to $\lambda$ and $\mu$ as given in Remark \ref{RemTrans}, we can express $\hat{p}_n(x_1)$ as a function of only $\lambda,\mu,n,x_1$:
$$\hat{p}_n(x_1)= \frac{p_1(x_1) (\lambda p_0(x_1))^{n-1}}{1-\mu p_0(x_1)}.$$
This leads to:
\begin{eqnarray*}
p_n(x_1) &=& \sum_{k=1}^{n-1}  \frac{p_1(x_1) (\lambda p_0(x_1))^{k-1}}{1-\mu p_0(x_1)}  \frac{p_1(x_1) (\lambda p_0(x_1))^{n-k-1}}{1-\mu p_0(x_1)}\\
&=& (n-1) \frac{p_1(x_1)^2 (\lambda p_0(x_1))^{n-2}}{(1-\mu p_0(x_1))^2}.
\end{eqnarray*}
\end{proof}

\begin{thm}
The length of a randomly picked pendant edge in a reconstructed  birth--death tree of age $x_1$ has probability density function for $s<x_1$:
\begin{eqnarray*}
f_p(s|x_1) &=& 2\frac{p_1(s)p_1(x_1)^2}{p_0(x_1)(1-\mu p_0(x_1))^2}\left( h(1,x_1) -  \frac{p_0(s)}{p_0(x_1)} h(3,x_1)\right),
\end{eqnarray*}
where
\begin{eqnarray*}
 h(k,x_1) &:=&\frac{(k+1)\lambda p_0(x_1) - k}{(1- \lambda p_0(x_1))^2}  - \frac{2k \log[1-\lambda p_0(x_1)] }{(\lambda p_0(x_1))^2}- \frac{2k }{\lambda p_0(x_1)},
\end{eqnarray*}
 and for $s=x_1$,
$$f_p(x_1|x_1) = -2(\log(1-\lambda p_0(x_1)) +\lambda p_0(x_1)) \left( \frac{p_1(x_1)}{\lambda p_0(x_1) (1-\mu p_0(x_1))}\right)^2 \delta(0),$$
where $\delta$ is the Dirac delta function.
 In particular, the probability $f_p(s|x_1)$ can be written as a function of the transformed parameters $\lambda,\mu$ instead of $\hat{\lambda},\hat{\mu},f$.
\end{thm}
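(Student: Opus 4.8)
The plan is to condition on the (random) number $n$ of sampled extant species. Given only the age $x_1$, this number has the distribution $p_n(x_1)$ of Lemma~\ref{Lempn}, and conditional on both $n$ and $x_1$ a randomly chosen pendant edge has the length density $f_p(\cdot\,|n,x_1)$ computed in Theorem~\ref{Thmfpnx1}. Hence by the law of total probability
$$f_p(s|x_1) \;=\; \sum_{n=2}^{\infty} p_n(x_1)\, f_p(s|n,x_1),$$
with the interchange of summation and density justified by non-negativity of every term. Throughout I would write $q:=\lambda p_0(x_1)$ and $r:=p_0(s)/p_0(x_1)$; the explicit formulas (\ref{p0eq}) give $q<1$ strictly (since either $\mu<\lambda$, or $\mu=\lambda$ and then $\lambda x_1>0$), so all the power series below converge. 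It is also worth recording the identity $p_1(x_1)=(1-\mu p_0(x_1))(1-\lambda p_0(x_1))$, immediate from (\ref{p0eq})--(\ref{p1eq}); among other things it shows $\sum_{n\ge 2}p_n(x_1)=1$, so the conditioning above is legitimate.

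For the atom at $s=x_1$, substitute $f_p(x_1|n,x_1)=\tfrac{2}{n(n-1)}\delta(0)$ and the formula for $p_n(x_1)$ from Lemma~\ref{Lempn}; the factor $n-1$ cancels, leaving $\delta(0)$ multiplied by $\tfrac{2\,p_1(x_1)^2}{(1-\mu p_0(x_1))^2}\sum_{n\ge2}q^{n-2}/n$, and
$$\frac{2\,p_1(x_1)^2}{(1-\mu p_0(x_1))^2}\sum_{n=2}^{\infty}\frac{q^{n-2}}{n} \;=\; \frac{2\,p_1(x_1)^2\bigl(-\log(1-q)-q\bigr)}{q^2(1-\mu p_0(x_1))^2},$$
using $\sum_{n\ge2}q^{n-2}/n=q^{-2}(-\log(1-q)-q)$, which follows from $\sum_{n\ge1}q^n/n=-\log(1-q)$. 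This is precisely the claimed expression for $f_p(x_1|x_1)$.

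For $s<x_1$, insert the $s<x_1$ formula of Theorem~\ref{Thmfpnx1} together with $p_n(x_1)$ from Lemma~\ref{Lempn}; once more the factor $n-1$ cancels and one is left, writing $A:=\sum_{n\ge2}\tfrac{(n-1)(n-2)}{n}q^{n-2}$ and $B:=\sum_{n\ge2}\tfrac{(n-2)(n-3)}{n}q^{n-2}$, with
$$f_p(s|x_1) \;=\; 2\,\frac{p_1(s)p_1(x_1)^2}{p_0(x_1)(1-\mu p_0(x_1))^2}\bigl(A-rB\bigr).$$
It then only remains to check that $A=h(1,x_1)$ and $B=h(3,x_1)$. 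For that I would use the polynomial divisions $\tfrac{(n-1)(n-2)}{n}=n-3+\tfrac{2}{n}$ and $\tfrac{(n-2)(n-3)}{n}=n-5+\tfrac{6}{n}$, split each of $A$, $B$ into the corresponding two series, and evaluate the pieces by the elementary identities $\sum_{m\ge0}q^m=\tfrac{1}{1-q}$ and $\sum_{m\ge0}mq^m=\tfrac{q}{(1-q)^2}$ (after the shift $m=n-2$) together with $\sum_{n\ge2}q^{n-2}/n=q^{-2}(-\log(1-q)-q)$. This gives $A=\tfrac{2q-1}{(1-q)^2}-\tfrac{2\log(1-q)}{q^2}-\tfrac{2}{q}$ and $B=\tfrac{4q-3}{(1-q)^2}-\tfrac{6\log(1-q)}{q^2}-\tfrac{6}{q}$, i.e.\ $A=h(1,x_1)$ and $B=h(3,x_1)$, which completes the density formula. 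The transformation claim is then automatic: $p_n(x_1)$ depends on the parameters only through $\lambda,\mu$ (Lemma~\ref{Lempn}), and so does $f_p(s|n,x_1)$ (Theorem~\ref{Thmfpnx1}, via $p_0$ and $p_1$), hence so does the sum $f_p(s|x_1)$.

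The only genuine work is this last bookkeeping step: matching the two power series to the closed form $h(k,x_1)$ takes some care with the index shift $m=n-2$ and with the factor $q^{-2}$ relating $\sum_{n\ge2}q^{n-2}/n$ to $\sum_{n\ge1}q^n/n$, but there is no conceptual obstacle beyond that.
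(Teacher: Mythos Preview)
Your proof is correct and follows essentially the same route as the paper: condition on $n$ via $f_p(s|x_1)=\sum_{n\ge 2} p_n(x_1)f_p(s|n,x_1)$, cancel the factor $n-1$, and reduce everything to the two power series in $q=\lambda p_0(x_1)$, which are then summed in closed form. The only cosmetic difference is that the paper evaluates $h(k,x_1)=\sum_{n\ge 3}\frac{n-2}{n}(n-k)q^{n-2}$ for general $k$ by the split $(n-2)-k\frac{n-2}{n}$, whereas you handle $k=1$ and $k=3$ separately via the polynomial divisions $\frac{(n-1)(n-2)}{n}=n-3+\frac{2}{n}$ and $\frac{(n-2)(n-3)}{n}=n-5+\frac{6}{n}$; both arrive at the same closed forms.
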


\begin{proof}
Using Theorem \ref{Thmfpnx1} and Lemma \ref{Lempn}, for $s<x_1$, we have (using the transformed parameters $\lambda,\mu$ instead of $\hat{\lambda},\hat{\mu},f$):
\begin{eqnarray*}
f_p(s|x_1) &=& \sum_{n=3}^\infty f_p(s|n,x_1) p(n|x_1)\\
&=& \sum_{n=3}^\infty  2  \frac{ (n-2)}{n(n-1)}  \frac{p_1(s)}{p_0(x_1)} \left( (n-1)  - (n-3) \frac{p_0(s)}{p_0(x_1)} \right)  (n-1) \frac{p_1(x_1)^2 (\lambda p_0(x_1))^{n-2}}{(1-\mu p_0(x_1))^2}\\
&=& 2\frac{p_1(s)p_1(x_1)^2}{p_0(x_1)(1-\mu p_0(x_1))^2} \sum_{n=3}^\infty   \frac{ (n-2)}{n}   \left( (n-1)  - (n-3) \frac{p_0(s)}{p_0(x_1)} \right)   (\lambda p_0(x_1))^{n-2}
\end{eqnarray*}
We also have:
\begin{eqnarray*}
 h(k,x_1):= &=& \sum_{n=3}^\infty   \frac{ n-2}{n}  (n-k)  (\lambda p_0(x_1))^{n-2} \\&=&  \sum_{n=3}^\infty   (n-2)  (\lambda p_0(x_1))^{n-2} 
 - \sum_{n=3}^\infty  k \frac{n-2}{n}  (\lambda p_0(x_1))^{n-2}\\
 &=& \frac{\lambda p_0(x_1)}{(1- \lambda p_0(x_1))^2} -  \frac{k}{1-\lambda p_0(x_1)} +k + \frac{2k}{(\lambda p_0(x_1))^2}  \sum_{n=3}^\infty \frac{(\lambda p_0(x_1))^{n}}{n}\\
&=& \frac{(k+1)\lambda p_0(x_1) - k}{(1- \lambda p_0(x_1))^2} +k + \frac{2k \left( - \log[1-\lambda p_0(x_1)] -\lambda p_0(x_1) -\frac{(\lambda p_0(x_1))^2}{2}\right)}{(\lambda p_0(x_1))^2}\\
&=& \frac{(k+1)\lambda p_0(x_1) - k}{(1- \lambda p_0(x_1))^2}  - \frac{2k \log[1-\lambda p_0(x_1)] }{(\lambda p_0(x_1))^2}- \frac{2k }{\lambda p_0(x_1)}.
\end{eqnarray*}
Overall, this yields:
\begin{eqnarray*}
f_p(s|x_1) &=& 2\frac{p_1(s)p_1(x_1)^2}{p_0(x_1)(1-\mu p_0(x_1))^2}\left( h(1,x_1) -  \frac{p_0(s)}{p_0(x_1)} h(3,x_1)\right).
\end{eqnarray*}
For $s=x_1$, we obtain $f_p(x_1|x_1)$ in an analogue way, using  $ f_p(x_1|x_1) = \sum_{n=2}^\infty f_p(x_1|n,x_1) p(n|x_1)$.
\end{proof}
Evaluating the first moment integral of $f_p(s|x_1)$, or evaluating $\sum_{n=2}^\infty \bE[p|n,x_1] p(n)$ with $  \bE[p|n,x_1] $ from 
Corollary~\ref{CorExpnx1}, provides an analytic expression for the expected length of a randomly chosen pendant edge.

The following theorem establishes a general result under Scenario (iii), namely that a birth--death model with incomplete sampling can be transformed into a birth--death model with complete sampling when considering reconstructed trees.
\begin{thm}
\label{transformthm}
Let $T$ be a reconstructed tree conditioned on $x_1$. The probability density of  $T$ having speciation times $x_2,\ldots,x_{n-1}$ with parameters $\hat{\lambda},\hat{\mu},f$ equals the probability density of $T$ having speciation times $x_2,\ldots,x_{n-1}$   with parameters $\lambda,\mu$ (where $\lambda=f \hat{\lambda}, \mu = \hat{\mu} - \hat{\lambda}(1-f)$, as above) under complete sampling.
\end{thm}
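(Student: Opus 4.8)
The plan is to deduce the Scenario~(iii) equivalence from the already-known Scenario~(ii) equivalence (Remark~\ref{RemTrans}, from \cite{Stadler2009JTB}) together with Lemma~\ref{Lempn}, simply by conditioning on the number of leaves. Fix a reconstructed tree $T$ with $n\geq 2$ leaves, most recent common ancestor at the prescribed time $x_1$, and remaining speciation times $x_1>x_2>\cdots>x_{n-1}$. Since $n$ is determined by $T$, the event ``the reconstructed tree equals $T$ with speciation times in an infinitesimal neighbourhood of $(x_2,\dots,x_{n-1})$'' is contained in the event that there are exactly $n$ sampled extant leaves, so the chain rule gives
\[
f(T\mid x_1)=p_n(x_1)\cdot f(T\mid n,x_1),
\]
where $p_n(x_1)$ is the probability of $n$ extant descendants given $x_1$ (Lemma~\ref{Lempn}) and $f(T\mid n,x_1)$ is the Scenario~(ii) density. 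I would then argue that each factor on the right is invariant under the substitution $(\hat\lambda,\hat\mu,f)\mapsto(\lambda,\mu)$ with complete sampling.

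For the first factor this is immediate: Lemma~\ref{Lempn} already exhibits $p_n(x_1)=(n-1)p_1(x_1)^2(\lambda p_0(x_1))^{n-2}/(1-\mu p_0(x_1))^2$ as a function of $p_0,p_1$, hence of $\lambda,\mu$ only, and complete sampling is the case $f=1$ (i.e.\ $\lambda=\hat\lambda$, $\mu=\hat\mu$), for which the same formula applies. For the second factor I would invoke the Scenario~(ii) part of Remark~\ref{RemTrans}; alternatively, and self-containedly, $f(T\mid n,x_1)$ is the product of the parameter-free uniform probability on ranked topologies and the joint density of the order statistics $x_2>\cdots>x_{n-1}$ of $n-2$ i.i.d.\ variables with density $g(s\mid x_1)=p_1(s)/p_0(x_1)$ (the order-statistics representation from \cite{Gernhard2008JTB} recalled in the Lemma preceding Theorem~\ref{Thmfpnx1}), and $g$ depends on the rates only through $\lambda,\mu$. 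Multiplying the two invariant factors shows $f(T\mid x_1)$ computed from $(\hat\lambda,\hat\mu,f)$ equals $f(T\mid x_1)$ computed from $(\lambda,\mu)$ under complete sampling, which is the claim.

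The substantive point — and the only step requiring genuine care rather than bookkeeping — is justifying the displayed factorisation, in particular confirming that the quantity $p_n(x_1)$ defined in Lemma~\ref{Lempn} really is the marginal $\mathbb{P}(N=n\mid x_1)$ under Scenario~(iii): because $x_1$ is the age of the \emph{most recent common ancestor}, one must use that the split at $x_1$ leaves at least one sampled extant descendant on each side, which is exactly what produces the convolution $\sum_{k=1}^{n-1}\hat p_k(x_1)\hat p_{n-k}(x_1)$ in that lemma, and one must keep track of the (parameter-free) uniform distribution on ranked topologies when $T$ is regarded as carrying a tree shape. Once this identification is in place the theorem follows with no further computation; a direct verification, writing $f_p$-type densities explicitly in terms of $p_0,p_1$ and checking the rate-parameters enter only via $\lambda,\mu$, is also possible but is strictly more work than the decomposition above.
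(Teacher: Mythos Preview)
Your proof is correct but takes a different route from the paper. The paper proceeds directly: it quotes from \cite{Stadler2010JTB} an explicit closed-form expression for the Scenario~(iii) density,
\[
f(x\mid x_1)=\left(\frac{q_1(x_1)}{1-q_0(x_1)}\right)^2\prod_{i=2}^{n-1}\hat\lambda\,q_1(x_i),
\]
with $q_0,q_1$ written out in terms of $\hat\lambda,\hat\mu,f$, and then checks algebraically that under the substitution $\lambda=f\hat\lambda$, $\mu=\hat\mu-\hat\lambda(1-f)$ this expression collapses to the same formula with $\lambda,\mu$ and complete sampling.

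Your argument instead factors $f(T\mid x_1)=p_n(x_1)\,f(T\mid n,x_1)$ and handles the two factors separately via Lemma~\ref{Lempn} and the Scenario~(ii) case of Remark~\ref{RemTrans}. This is cleaner in that it reuses results already available in the paper and avoids importing (and then manipulating) the explicit density from an external reference; it also makes transparent \emph{why} the transformation propagates from Scenario~(ii) to Scenario~(iii). The paper's approach, on the other hand, yields the explicit Scenario~(iii) density as a byproduct, which your decomposition does not display directly (though one could of course recover it by multiplying out the two factors). Your caution about identifying $p_n(x_1)$ with the correct marginal under the both-sides-survive conditioning of Scenario~(iii) is well placed and exactly what Lemma~\ref{Lempn}'s convolution argument supplies.
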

\begin{proof}
The probability density of $x$ is provided in  \cite{Stadler2010JTB}, Theorem 3.8, with $m=0$,
$$f(x|x_1)= \left(\frac{q_1(x_1)}{1-q_0(x_1)}\right)^2 \cdot \prod_{i=2}^{n-1} \hat{\lambda} q_1(x_i),$$
where
\begin{eqnarray*}
q_0(x_1) &=&   1-  \frac{f (\hat{\lambda}- \hat{\mu}) }{ f \hat{\lambda}+(\hat{\lambda}(1-f) -\hat{\mu}) e^{-(\hat{\lambda}- \hat{\mu}) x_1}  },\\
q_1(x_1) &=& \frac{f (\hat{\lambda}- \hat{\mu})^2 e^{-(\hat{\lambda}- \hat{\mu}) x_1}}{ (f \hat{\lambda}+(\hat{\lambda}(1-f) -\hat{\mu}) e^{-(\hat{\lambda}- \hat{\mu}) x_1})^2  }.
\end{eqnarray*}
The transformation $\lambda=f \hat{\lambda}, \mu = \hat{\mu} - \hat{\lambda}(1-f)$ now establishes the theorem.
 \end{proof}

\section{Length of a root edge in a Yule tree}
\label{rootsec}

A reconstructed tree has two edges descending from the root, and we denote these as $e'$ and $e''$
where we may assume that $e'$ is shorter than $e''$ (see Fig.~\ref{fig_two}).  By a {\em root edge} we mean the selection of $e'$ or $e''$ with equal probability. We will calculate the length of the root edge in a Yule tree under Scenario (i)-(iii).

We will show that, under Scenario (i), a root edge is longer than a randomly chosen interior edge (which has exponential distribution with parameter $2 \lambda$, Theorem \ref{ThmIntn}).

\subsection*{(i) Conditioning on $n$}

Let $X_i$ be an exponentially distributed random variable with parameter $i \lambda$. Then for $k \geq 2$,  $I_k=\sum_{i=2}^k X_i$ is the hypo-exponential distribution with probability density:
\begin{eqnarray*}
f_{I_k}(t) &=& \sum_{i=2}^k  \lambda i e^{-\lambda i t} \prod_{j=2,j\neq i}^k \frac{j}{j-i},
\end{eqnarray*}
which can be transformed to
\begin{eqnarray}
f_{I_k}(t) &=&  \sum_{i=2}^k  \lambda  e^{-\lambda i t} (-1)^{i-2} \frac{k!}{(k-i)!(i-2)!} \notag \\
&=& k(k-1) \sum_{i=2}^k  \lambda  (-e^{-\lambda  t})^i  {k-2 \choose i-2}.\label{EqnIk}
\end{eqnarray}

\begin{thm}
The length $L$ of one of the two edges descending from the root (picked uniformly at random) has probability density function:
$$f_L(t|n) =  \lambda  e^{-\lambda  t} \left(  1-(1-e^{-\lambda  t})^{n-2}(1-ne^{-\lambda  t}) \right).$$
\end{thm}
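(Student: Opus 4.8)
The plan is to reduce $L$ to the length of the root edge on a fixed side of the tree, and then to read off which of the $n-1$ speciation events (at times $x_1>\cdots>x_{n-1}$) is the first one lying strictly inside that side's subtree; the root edge length is then a partial sum of inter-speciation waiting times, whose law is already recorded in Eqn.~(\ref{EqnIk}). I would start from two standard facts about a reconstructed Yule tree on $n$ leaves under Scenario~(i). First, the consecutive gaps $T_j:=x_{j-1}-x_j$, with $T_n:=x_{n-1}$ measured down to the present, are independent with $T_j$ exponential of rate $j\lambda$ for $j=2,\dots,n$; in particular $x_1-x_r=\sum_{j=2}^{r}T_j$, which is exactly the variable $I_r$ of Eqn.~(\ref{EqnIk}), so $f_{I_r}$ is its density. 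Second, this family of gaps is independent of the tree shape and of the order in which lineages split \cite{Stadler2010SystBiol,Gernhard2008JTB}. By the symmetry of the two root edges, $L$ has the same distribution as $|e_A|$, the length of the root edge leading to a fixed one of the two root subtrees, which I call $A$; write $a$ for the daughter lineage created at the root split on that side.

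Next I would decompose according to where $a$ first speciates. Reading the $n-2$ post-root speciation events forward in time, whenever $k$ lineages are present the splitting one is uniform among them, so $a$ first splits at the $r$-th speciation overall with probability $\prod_{k=2}^{r-1}\bigl(1-\tfrac1k\bigr)\cdot\tfrac1r=\tfrac1{(r-1)r}$ for $r=2,\dots,n-1$; in that case $A$ has at least two leaves and $|e_A|=x_1-x_r$. With the remaining probability $\tfrac1{n-1}$ the lineage $a$ never splits, $A$ is a single leaf, and $|e_A|=x_1$. Using the independence of the splitting sequence from the gaps $T_j$, conditionally on these events $|e_A|$ is distributed as $I_r$ (respectively $I_n$), so
\[
f_L(t|n)=\sum_{r=2}^{n-1}\frac{1}{(r-1)r}\,f_{I_r}(t)+\frac{1}{n-1}\,f_{I_n}(t).
\]

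Finally I would insert the closed form of $f_{I_r}$: collapsing the binomial sum in Eqn.~(\ref{EqnIk}) via $\sum_{i=2}^{r}\binom{r-2}{i-2}(-e^{-\lambda t})^{i}=e^{-2\lambda t}(1-e^{-\lambda t})^{r-2}$ gives $f_{I_r}(t)=r(r-1)\lambda e^{-2\lambda t}(1-e^{-\lambda t})^{r-2}$, hence $\tfrac1{(r-1)r}f_{I_r}(t)=\lambda e^{-2\lambda t}(1-e^{-\lambda t})^{r-2}$ and $\tfrac1{n-1}f_{I_n}(t)=n\lambda e^{-2\lambda t}(1-e^{-\lambda t})^{n-2}$. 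Summing the finite geometric series $\sum_{r=2}^{n-1}(1-e^{-\lambda t})^{r-2}=\bigl(1-(1-e^{-\lambda t})^{n-2}\bigr)e^{\lambda t}$ and combining the terms yields $f_L(t|n)=\lambda e^{-\lambda t}\bigl(1-(1-e^{-\lambda t})^{n-2}(1-ne^{-\lambda t})\bigr)$, as claimed. The step that needs the most care is the independence invoked twice above — that the ordered ``which lineage splits'' choices, which fix both the tree shape and in particular which speciation event is the first one inside $A$, are independent of the waiting-time gaps, and that those gaps keep their exponential laws after one conditions on $n$ leaves (with the improper prior on the origin) and on the splitting sequence; this is a standard property of the Yule process but should be spelled out. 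Everything else is the routine bookkeeping of the binomial and geometric sums; a convenient sanity check is that the special coefficient $\tfrac1{n-1}$ on the ``$A$ is a leaf'' term is exactly what forces $\int_0^\infty f_L(t|n)\,dt=1$.
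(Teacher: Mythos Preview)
Your proof is correct and rests on the same core idea as the paper's: write the root-edge length as a mixture of the hypoexponential variables $I_r$ indexed by the speciation event at which the edge terminates, with mixing weights coming from the uniform ``which lineage splits'' choices of the Yule process. The paper reaches that mixture by first separating the shorter edge $e'$ (always ending at event~2, hence exponential rate $2\lambda$) from the longer edge $e''$ and then averaging; your symmetry argument---fix a side and track when that lineage first splits---arrives at exactly the same coefficients $\tfrac{1}{r(r-1)}$ and $\tfrac{1}{n-1}$ in one step. You also collapse Eqn.~(\ref{EqnIk}) to the closed form $f_{I_r}(t)=r(r-1)\lambda e^{-2\lambda t}(1-e^{-\lambda t})^{r-2}$ before summing, which turns the remaining computation into a finite geometric series; the paper instead keeps the alternating binomial form and works through a longer chain of index shifts and binomial identities. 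Your packaging is cleaner, but the argument is essentially the same.
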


\begin{proof}
In a Yule tree with $n$ extant species, the shorter edge $e'$ of the two edges descending from the root has a length $L'$ that is exponentially distributed with  parameter $2 \lambda$.
The length $L''$ of the longer edge descending from the root, $e''$, is calculated as follows. First note that the waiting time, $X_i$, between the $i-1$-th and $i$-th speciation event is an exponential distribution with parameter $i \lambda$. Let $e''$ terminate at the $k$-th speciation event in the tree. The length of $A$ is then $I_k$ with probability density given in Eqn. (\ref{EqnIk}).
The probability that $A$ terminates at the $k$-th speciation event ($2<k<n$) is:
$$p_k= \frac{1}{k} \prod_{j=3}^{k-1} \left(1-\frac{1}{j}\right)= 1/{k \choose 2},$$
and the probability that $A$ does not terminate until the present is,
$$p_n=\prod_{j=3}^{n-1} \left(1-\frac{1}{j}\right) = 2/(n-1).$$
The length of $e''$ is:
$$L''= \sum_{k=3}^n I_k p_k,$$
and thus the density function of $L''$ is,
\begin{eqnarray*}
f_{L''}(t|n) &=&  \sum_{k=3}^n f_{I_k}(t) p_k\\
&=& 2 \sum_{k=3}^{n-1}
\sum_{i=2}^k  \lambda  (-e^{-\lambda  t})^i  {k-2 \choose i-2} + 2 n \sum_{i=2}^n  \lambda (-e^{-\lambda  t})^i  {n-2 \choose i-2}\\
&=& 2 
\sum_{i=3}^{n-1} \sum_{k=i}^{n-1} \lambda  (-e^{-\lambda  t})^i  {k-2 \choose i-2} +  2 \sum_{k=3}^{n-1} \lambda  e^{- 2 \lambda  t} +  2 n \sum_{i=2}^n  \lambda  (-e^{-\lambda  t})^i {n-2 \choose i-2}
\\
&=& 2 
\sum_{i=3}^{n-1} \lambda (-e^{-\lambda  t})^i  {n-2 \choose i-1} + 2 (n-3) \lambda  e^{- 2 \lambda  t} +  2 n \sum_{i=2}^n  \lambda  (-e^{-\lambda  t})^i  {n-2 \choose i-2}\\
&=& 2 
\sum_{i=3}^{n-1} \lambda (-e^{-\lambda  t})^i  \left( {n-2 \choose i-1}+ n {n-2 \choose i-2} \right)   + 2(2n-3) \lambda  e^{- 2 \lambda  t}  +  2 n  \lambda  (-e^{-\lambda  t})^n  \\
&=& 2 
\sum_{i=3}^{n-1} \lambda  (-e^{-\lambda  t})^i   i {n-1 \choose i-1}   + 2(2n-3) \lambda  e^{- 2 \lambda  t}  +  2 n  \lambda  (-e^{-\lambda  t})^n  \\
&=& 2 
\lambda \sum_{i=3}^{n-1}  i (-e^{-\lambda  t})^i    {n-1 \choose i-1}  + 2(2n-3) \lambda  e^{- 2 \lambda  t}  +  2 n  \lambda  (-e^{-\lambda  t})^n \\
&=&  2 \lambda  e^{-\lambda  t} \left(  1-e^{-\lambda  t}-(1-e^{-\lambda  t})^{n-2}(1-n e^{-\lambda  t})   \right).
\end{eqnarray*}
The density of $L$ is,
\begin{eqnarray*}
f_L(t|n) &=& f_{L'}(t)/2 + f_{L''}(t)/2\\
&=&  \lambda  e^{-\lambda  t} \left(  1-(1-e^{-\lambda  t})^{n-2}(1-ne^{-\lambda  t})
\right),
\end{eqnarray*}
which establishes the theorem.
\end{proof}
Calculating the first moment of $L$ yields:
\begin{cor}
\label{root_n_cor}
The expected length $L$ of one of the two edges descending from the root (selected at random) is $\bE[L|n]=\frac{1}{\lambda}(1-\frac{1}{n})$.
\end{cor}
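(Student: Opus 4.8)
The plan is to compute $\bE[L|n] = \int_0^\infty t\, f_L(t|n)\,dt$ directly from the probability density function
$$f_L(t|n) = \lambda e^{-\lambda t}\left(1 - (1-e^{-\lambda t})^{n-2}(1-n e^{-\lambda t})\right)$$
just obtained in the preceding theorem. Rather than expanding $(1-e^{-\lambda t})^{n-2}$ by the binomial theorem and summing $n-1$ separate integrals, I would split the integrand into two natural pieces: the $\lambda e^{-\lambda t}$ term (a standard exponential density, contributing mean $1/\lambda$), and the correction term $-\lambda e^{-\lambda t}(1-e^{-\lambda t})^{n-2}(1-n e^{-\lambda t})$. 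For the correction term the cleanest route is the substitution $u = 1 - e^{-\lambda t}$, so that $du = \lambda e^{-\lambda t}\,dt$, $t = -\tfrac1\lambda \log(1-u)$, and $1 - n e^{-\lambda t} = 1 - n(1-u)$; the integral becomes $\tfrac1\lambda\int_0^1 \log(1-u)\, u^{n-2}\,(1 - n(1-u))\,du$, a Beta-type integral.

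The key steps, in order, are: (1) write $\bE[L|n] = \tfrac1\lambda - \int_0^\infty t\,\lambda e^{-\lambda t}(1-e^{-\lambda t})^{n-2}(1-n e^{-\lambda t})\,dt$; (2) perform the substitution above to reduce the remaining integral to $-\tfrac1{\lambda}\int_0^1 \log(1-u)\bigl(u^{n-2} - n u^{n-2}(1-u)\bigr)\,du$; (3) evaluate the two resulting integrals using the standard formula $\int_0^1 u^{a-1}\log(1-u)\,du = -\tfrac1a\bigl(\psi(a+1)+\gamma\bigr) = -\tfrac1a H_a$ (equivalently, $\int_0^1 u^{a-1}(1-u)^{b-1}(-\log(1-u))\,du = B(a,b)(\psi(a+b)-\psi(b))$ with the relevant $a,b$ integers, so everything collapses to harmonic numbers); (4) combine: the harmonic-number contributions must telescope, leaving only $\tfrac1\lambda - \tfrac1{\lambda n}$.

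Alternatively — and I would mention this as a sanity check or even the primary argument — one can avoid integration entirely by revisiting the decomposition $L = \tfrac12 L' + \tfrac12 L''$ from the theorem's proof: $\bE[L'] = 1/(2\lambda)$ since $L'$ is exponential with rate $2\lambda$, and $\bE[L''] = \sum_{k=3}^n \bE[I_k] p_k = \sum_{k=3}^n \bigl(\sum_{i=2}^k \tfrac1{i\lambda}\bigr)p_k$ with $p_k = 1/\binom{k}{2}$ for $k<n$ and $p_n = 2/(n-1)$; this sum of harmonic-type terms should also collapse to $\tfrac1\lambda - \tfrac1{2\lambda} - \tfrac1{\lambda n}$, giving $\bE[L''] = \tfrac1\lambda(\tfrac12 - \tfrac1n + \text{something})$ consistent with $\bE[L] = \tfrac1\lambda(1-\tfrac1n)$. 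The main obstacle in either approach is the bookkeeping in the telescoping/cancellation of the harmonic numbers $H_k$ (or the $\psi$-function values): the individual integrals are not small, and one has to be careful that the $-n u^{n-2}(1-u)$ factor and the $u^{n-2}$ factor produce harmonic numbers at shifted indices ($H_n$ versus $H_{n-1}$) whose difference is exactly $1/n$. I would organize that cancellation explicitly and keep the $\mu=0$ Yule normalization (complete sampling, $f=1$) in mind throughout so the $p_0, p_1$ simplifications $p_0(s) = (1-e^{-\lambda s})/\lambda$, $p_1(s)=e^{-\lambda s}$ are available if needed. Either way the final answer $\bE[L|n] = \tfrac1\lambda(1 - \tfrac1n)$ follows, and one immediately observes it exceeds $\tfrac1{2\lambda}$ for $n\geq 3$, confirming that a root edge is on average longer than a random interior edge.
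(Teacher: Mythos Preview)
Your proposal is correct, and your primary approach---computing $\int_0^\infty t\,f_L(t|n)\,dt$ via the substitution $u=1-e^{-\lambda t}$ and the identity $\int_0^1 u^{a-1}\log(1-u)\,du=-H_a/a$---is exactly what the paper has in mind when it writes ``Calculating the first moment of $L$ yields'' (no further detail is given there). Your alternative route via $\bE[L]=\tfrac12\bE[L']+\tfrac12\bE[L'']$ with $\bE[L'']=\sum_{k=3}^n p_k\,\bE[I_k]$ is a valid independent check that the paper does not present.
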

Comparing this result with Corollary \ref{pendyulen} and Theorem \ref{ThmIntn} we see that for a Yule tree conditioned on having $n$ species, 
the expected length of one of the two edges descending from the root, selected at random, is (asymptotically) {\em twice} the length of a randomly selected edge (or of a randomly selected pendant edge, or of a randomly selected interior edge).

\subsection*{(iii) Conditioning on $x_1$}
Before considering Scenario (ii), we first consider Scenario (iii), i.e. the time since the root is $x_1$. 
In a Yule tree with the root having age $x_1$, select one of the two edges incident with the root uniformly at random (e.g. by a fair coin toss), and let $L$ denote its length (up to time $x_1$).  Then $L$ has a discontinuous distribution:
$$\bP(L >l|x_1) =
\begin{cases}
e^{-\lambda l}, \mbox{ for } 0<\l< x_1;\\
0, \mbox{ for } l\geq x_1;
\end{cases}
$$
which implies that $\bE[L|x_1] = \frac{1}{\lambda} (1-e^{-\lambda x_1})$, and so, in particular:
$$\bE[L|x_1] = \frac{1}{\lambda} - o(1),$$ where $o(1)$ is a term that goes to zero exponentially fast with $x_1$.

Note that the distribution of $L$ should not be confused with the truncated exponential distribution arising from the conditional probability
$\bP(L>l| L \leq t)$, since this is the probability that a speciation event occurs on this lineage before or at time $x_1$.

\subsection*{(ii) Conditioning on $n$ and $x_1$}

We first  consider a Yule tree starting from a single lineage at time $0$ and grown for time $t$. Let $K_t$ be the number of leaves at time $t$, and consider
the length $I$ of the initial edge up to time $t$. Let $\bP(I>l | K_t=k)$ denote the probability that $I$ is greater than $l$, conditional on the event that $K_t=k$.

\begin{lem}
\label{ilem}
$$\bP(I>l |K_t=k) = 
\begin{cases}
\alpha^{k-1}, \mbox{ if } l <  t;\\
0, \mbox{ if } l\geq t;
\end{cases}
$$
where $\alpha = \frac{1-e^{-\lambda(t-l)}}{1-e^{-\lambda t}}.$ 
\end{lem}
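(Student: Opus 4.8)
The plan is to avoid the convolution computation entirely and instead exploit the Markov property of the Yule process together with the elementary fact that the number of descendants of a single lineage after time $s$ in a Yule process with rate $\lambda$ is geometrically distributed, $\bP(K_s=j)=e^{-\lambda s}(1-e^{-\lambda s})^{j-1}$ for $j\ge 1$ (a standard property of the Yule process, consistent with $p_1(s)=e^{-\lambda s}$ and $\lambda p_0(s)=1-e^{-\lambda s}$ from the discussion around Eqns. (\ref{p0eq})--(\ref{p1eq})). Once this is in hand the claim reduces to a one-line application of Bayes' rule.

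First I would dispose of the easy case $l\ge t$: since the initial edge is by definition truncated at time $t$, we always have $I\le t$, so $\bP(I>l\mid K_t=k)=0$ whenever $l\ge t$. For the remainder fix $0\le l<t$ and let $T_1$ denote the time of the first speciation event on the initial lineage, so that $T_1$ is exponential with rate $\lambda$ and $I=\min(T_1,t)$. Because $l<t$, the event $\{I>l\}$ coincides with $\{T_1>l\}$, and hence $\bP(I>l)=e^{-\lambda l}$.

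Next I would identify the conditional law of $K_t$ given $\{I>l\}$. By memorylessness of the exponential waiting time and the Markov property of the Yule process, conditional on $\{T_1>l\}$ the process restricted to the interval $[l,t]$ is again a Yule process started from a single (not-yet-speciated) lineage and run for time $t-l$; therefore $\bP(K_t=k\mid I>l)=e^{-\lambda(t-l)}(1-e^{-\lambda(t-l)})^{k-1}$. Combining this with the unconditional law $\bP(K_t=k)=e^{-\lambda t}(1-e^{-\lambda t})^{k-1}$, Bayes' rule gives
$$\bP(I>l\mid K_t=k)=\frac{\bP(K_t=k\mid I>l)\,\bP(I>l)}{\bP(K_t=k)}=\frac{e^{-\lambda(t-l)}(1-e^{-\lambda(t-l)})^{k-1}e^{-\lambda l}}{e^{-\lambda t}(1-e^{-\lambda t})^{k-1}},$$
and since $e^{-\lambda(t-l)}e^{-\lambda l}=e^{-\lambda t}$ the factors $e^{-\lambda t}$ cancel, leaving $\alpha^{k-1}$ with $\alpha=(1-e^{-\lambda(t-l)})/(1-e^{-\lambda t})$, as claimed.

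Since the core of the argument is a single Bayes computation, there is no substantial obstacle; the only points requiring a little care are the boundary bookkeeping — observing that $\{I>l\}=\{T_1>l\}$ exactly when $l<t$, which is precisely what makes both $\bP(I>l)$ and the conditional law of $K_t$ transparent — and noting that the resulting formula $\alpha^{k-1}$ also covers the degenerate value $k=1$, where it equals $1$, consistent with $I$ necessarily exceeding $l$ when no speciation has yet occurred. If one preferred not to invoke the Markov property in this packaged form, an alternative is to condition on $T_1=\tau$, use the convolution-of-geometrics expression for $K_t$, and integrate over $\tau\in(l,t)$ after the substitution $u=e^{-\lambda(t-\tau)}$; this reproduces the same numerator $e^{-\lambda t}(1-e^{-\lambda(t-l)})^{k-1}$ but is more computational.
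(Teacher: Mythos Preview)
Your proof is correct and, in fact, slicker than the paper's. Both arguments hinge on Bayes' rule and the geometric law $\bP(K_s=j)=e^{-\lambda s}(1-e^{-\lambda s})^{j-1}$, but they apply Bayes at different levels. The paper works at the density level: it computes $f(s\mid K_t=k)=\bP(K_t=k\mid I=s)f(s)/\bP(K_t=k)$, which requires the negative-binomial expression $\bP(K_t=k\mid I=s)=(k-1)(1-e^{-\lambda(t-s)})^{k-2}e^{-2\lambda(t-s)}$ for two independent lineages started at time $s$, and then integrates $\int_l^t f(s\mid K_t=k)\,ds$ via the substitution $u=1-e^{-\lambda(t-s)}$. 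You instead apply Bayes directly to the survival event $\{I>l\}$ and exploit memorylessness to identify $\bP(K_t=k\mid I>l)$ with the geometric law at horizon $t-l$, so the answer drops out without any integration or two-lineage convolution. The gain is that your route needs only the one-lineage geometric law and the Markov property, whereas the paper's route implicitly uses the two-lineage distribution and a change of variables; the paper's approach does, however, yield the full conditional density $f(s\mid K_t=k)$ as a by-product, which yours does not.
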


\begin{proof}

Let $f(s|K_t=k)$ be the density of $I$ conditional on $K_t=k$. 
By Bayes' formula:
\begin{equation}
\label{bayes}
f(s|K_t=k)=\frac{\mathbb{P}(K_t=k|I=s)f(s)}{\mathbb{P}(K_t=k)},
\end{equation}
where $$f(s)= \lambda e^{-\lambda s}$$ is the (unconditional) first branch length.  
The unconditional distribution of $K_t$ is $$\mathbb{P}(K_t=k)=(1-e^{-\lambda t})^{k-1}e^{-\lambda t}.$$ 
Moreover,  it is an easy exercise to show that, for $s< t$, $$\mathbb{P}(K_t=k|I=s)=(k-1)(1-e^{-\lambda(t-s)})^{k-2}e^{-2\lambda(t-s)}.$$
Combining these three expressions into (\ref{bayes}) gives:
\begin{equation}
\label{fs}
f(s|K_t=k) =\frac{(k-1)\lambda e^{-\lambda (t-s)}(1-e^{-\lambda (t-s)})^{k-2}}{(1-e^{-\lambda t})^{k-1}}.
\end{equation}
Now,
\begin{equation}
\mathbb{P}(I >l |  K_t=k)=\int_l^t\ f(s|K_t=k)ds.
\end{equation}\\
and so substituting (\ref{fs}),  applying the substitution $u=1-e^{-\lambda t} e^{\lambda s}$ and rearranging, one obtains:
\\
\begin{equation}
\mathbb{P}(I > l | K_t=k)=\left[ \frac{k-1}{(1-e^{-\lambda t})^{k-1}}\right]\cdot\int_0^{1-e^{-\lambda t}e^{\lambda l}}u^{k-2} du,
\end{equation}
which gives the value $\alpha^{k-1}$ for $l < t$.
\end{proof}

Consider now a Yule tree $T$ and suppose we condition on both $n$ and $x_1$.  Select uniformly at random one of the edges of  $T$  that are incident with the root,  let $L$ denote its length up to time $x_1$, and  let $\bP(L>l|n,x_1)$ denote the probability that $L>l$, conditional on $T$ having $n$ leaves at time $x_1$. 

\begin{thm}
\begin{equation}
\label{basicp}
\bP(L>l|n, x_1) = \begin{cases}
\frac{1}{n-1}\cdot \left(\frac{1-\alpha^{n-1}}{1-\alpha}\right), &  \mbox{\rm for }  l\leq x_1;\\
0, & \mbox{\rm  for } l > x_1.
\end{cases}
\end{equation}
\end{thm}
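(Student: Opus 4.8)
The plan is to condition on the number of leaves $K_{x_1}=k_1$ in the subtree hanging off the chosen root edge and on the number of leaves $K_{x_1}=k_2=n-k_1$ in the other root subtree, and to use the observation (already established earlier in the paper) that under the Yule model with the uniform prior on the time of origin, conditioning a Yule tree of age $x_1$ on having $n$ leaves makes the two subtrees descending from the root behave as two \emph{independent} Yule trees, each started from a single lineage at time $x_1$ in the past, with the joint distribution of $(k_1,k_2)$ being uniform on $\{(j,n-j): 1\le j\le n-1\}$. This is exactly the structural fact behind Equation~(\ref{pk}); the probability that the split at the root is $(k_1,k_2)$ is $\frac{1}{n-1}$ for each of the $n-1$ ordered possibilities.

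Concretely, first I would write
\begin{equation*}
\bP(L>l|n,x_1) = \sum_{k=1}^{n-1} \bP(L>l \mid K_{x_1}=k \text{ in the chosen subtree})\cdot \bP(k_1 = k \mid n, x_1).
\end{equation*}
The second factor is $\frac{1}{n-1}$ by the uniformity just mentioned. The first factor is precisely $\bP(I>l\mid K_t = k)$ from Lemma~\ref{ilem} with $t=x_1$, because the chosen root edge \emph{is} the initial edge of a Yule tree of age $x_1$ conditioned to have $k$ leaves: that subtree is, by the independence of the two root subtrees, distributed exactly as a Yule tree grown from one lineage for time $x_1$ conditioned on its leaf count being $k$. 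Hence by Lemma~\ref{ilem} this factor equals $\alpha^{k-1}$ for $l<x_1$ (and $0$ for $l\ge x_1$), with $\alpha = \frac{1-e^{-\lambda(x_1-l)}}{1-e^{-\lambda x_1}}$ as in the lemma.

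Substituting and summing the geometric series,
\begin{equation*}
\bP(L>l|n,x_1) = \frac{1}{n-1}\sum_{k=1}^{n-1}\alpha^{k-1} = \frac{1}{n-1}\cdot\frac{1-\alpha^{n-1}}{1-\alpha},
\end{equation*}
valid for $l\le x_1$, while for $l>x_1$ every term vanishes and the probability is $0$. This is exactly~(\ref{basicp}). I would also note the edge case $\alpha=1$ (i.e. $l=0$), where the sum degenerates to $\frac{1}{n-1}\cdot(n-1)=1$, consistent with the closed form interpreted as a limit; and the boundary $l=x_1$, where $\alpha=0$ and the expression gives $\frac{1}{n-1}$, matching the probability $\frac{2}{n(n-1)}\cdot\binom{n}{2}/(n-1)\cdot\ldots$ — more simply, matching that the chosen root edge has length exactly $x_1$ iff the chosen subtree has a single leaf, which has probability $\frac{1}{n-1}$.

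The main obstacle is not the computation but justifying the two inputs cleanly: (a) that conditional on $n$ and $x_1$ the two root subtrees are independent Yule trees of age $x_1$ with leaf counts uniform over $\{1,\dots,n-1\}$, and (b) that conditioning one such subtree on its own leaf count $k$ reduces the initial-edge distribution to the setting of Lemma~\ref{ilem}. Point~(a) follows from the forward-in-time description of the conditioned Yule process (the equivalence with stopping just before the $(n+1)$-th event, as used in the proof of Theorem~\ref{ThmIntn}, together with the exchangeability of lineages); point~(b) is immediate once~(a) is in hand, since independence lets us condition the chosen subtree on its leaf count without reference to the rest of the tree. Everything else is the one-line geometric-sum evaluation.
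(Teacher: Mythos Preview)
Your proof is correct and follows essentially the same approach as the paper: condition on the number of leaves $k$ below the selected root edge (uniform on $\{1,\ldots,n-1\}$), invoke Lemma~\ref{ilem} to get $\alpha^{k-1}$, and sum the geometric series. The paper's proof is terser---it simply asserts the uniform split as a ``fundamental property of the Yule model'' and writes down the same sum---whereas you spell out the independence of the two root subtrees and the edge cases, but the argument is the same.
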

\begin{proof}
It is a fundamental property of the Yule model that the number of leaves beneath the selected edge has a uniform distribution between $1$ and $n-1$. Thus, if
$l \leq x_1$ then: $$\bP(L>l|n, x_1) = \frac{1}{n-1} \sum_{k=1}^{n-1} \bP(I>l|K_{x_1}=k)$$ and the result now  follows directly by  Lemma~\ref{ilem}. 
\end{proof}

Let us now replace $\lambda$ by its maximum likelihood (ML) estimate $\lambda_{ML}=\ln(\frac{n}{2})/x_1$ and evaluate the expression for $\bP(L>t|n, x_1)$ as $n \rightarrow \infty$.  
Note that if we let $\lambda = \ln(\frac{n}{2})/t$ then $$\alpha = \frac{1-\frac{2}{n}e^{\lambda l}}{1-\frac{2}{n}},$$
and so:
$$\alpha^{n-1} = \frac{(1-\frac{2}{n}e^{\lambda l})^{n-1}}{(1-\frac{2}{n})^{n-1}}\sim \frac{\exp(-2e^{\lambda l})}{e^{-2}}, $$
where
$\lambda = \lambda_{ML} =\ln( \frac{n}{2})/t$ and where $\sim$ denotes asymptotic equivalence as $n \rightarrow \infty$. Moreover,
$$\frac{1}{n-1} \cdot \frac{1}{1-\alpha} = \frac{n}{2(n-1)}\frac{1-\frac{2}{n}}{e^{\lambda l}-1} \sim \frac{1}{2(e^{\lambda l}-1)}.$$
So, from (\ref{basicp}), we  obtain:
$$\bP(L>l|n, x_1)  \sim \frac {1-e^{-w}}{w}$$ where $w = 2(e^{\lambda l} - 1)$ and where $\sim$ denotes asymptotic equivalence as $n \rightarrow \infty$.

 To determine the expectation of $L$ conditional on $n, x_1$ we simply integrate
this expression from $L=0$ to $L=x_1$ with respect to $l$ (using the well known identity that $\bE[X] = \int_0^\infty \bP(X>x) dx$ for a non-negative continuous random variable $X$). Noting that $dl = \frac{dw}{\lambda(2+w)}$, we obtain the following result which exhibits a different limit to the value $\frac{1}{2}\lambda$ for a randomly selected edge of $T$ or the limiting value $\frac{1}{\lambda}$ described above when we just  condition on $x_1$. 
\begin{cor}
The expected length of one of the two randomly selected root edges,  conditional on $n$ and $x_1$, and with $\lambda$ set equal to its ML value, converges to $\frac{c}{\lambda}$ as $n \rightarrow \infty$ where:
$$c = \int_{0}^{\infty} \frac{1-e^{-x}}{x(2+x)}dx = 0.8158...$$
\end{cor}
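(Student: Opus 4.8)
The plan is to start from the identity $\bE[X]=\int_0^\infty \bP(X>x)\,dx$ for a non-negative random variable, applied to $L$ conditional on $n$ and $x_1$: since $\bP(L>l|n,x_1)=0$ for $l>x_1$ by~(\ref{basicp}), this gives $\bE[L|n,x_1]=\int_0^{x_1}\bP(L>l|n,x_1)\,dl$. I would fix $x_1$, set $\lambda=\lambda_{ML}=\ln(n/2)/x_1$ so that $e^{\lambda x_1}=n/2$, and substitute $w=2(e^{\lambda l}-1)$, for which $dl=\frac{dw}{\lambda(2+w)}$ and $l\in[0,x_1]$ maps onto $w\in[0,n-2]$. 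Since $e^{\lambda l}=1+w/2$, the quantity $\alpha$ of Lemma~\ref{ilem} becomes $\alpha=\frac{1-\frac2n e^{\lambda l}}{1-\frac2n}=\frac{n-2-w}{n-2}$, so $1-\alpha=\frac{w}{n-2}$, and~(\ref{basicp}) rewrites the expectation as
\begin{equation*}
\bE[L|n,x_1]=\frac1\lambda\int_0^{n-2}\Phi_n(w)\,dw,\qquad
\Phi_n(w):=\frac{1}{2+w}\cdot\frac{n-2}{(n-1)\,w}\left(1-\Bigl(1-\tfrac{w}{n-2}\Bigr)^{n-1}\right),
\end{equation*}
where we extend by $\Phi_n(w):=0$ for $w\ge n-2$.

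Next I would justify taking the limit inside the integral. For each fixed $w>0$, $\frac{n-2}{n-1}\to1$ and $\bigl(1-\tfrac{w}{n-2}\bigr)^{n-1}\to e^{-w}$, so $\Phi_n(w)\to\Phi(w):=\frac{1-e^{-w}}{w(2+w)}$ pointwise. To obtain an $n$-independent integrable majorant, Bernoulli's inequality $(1-x)^m\ge1-mx$ (for $0\le x\le1$, $m\ge1$) gives $1-\bigl(1-\tfrac{w}{n-2}\bigr)^{n-1}\le(n-1)\tfrac{w}{n-2}$, hence $\Phi_n(w)\le\tfrac1{2+w}$; and trivially $\Phi_n(w)\le\tfrac{1}{w(2+w)}$ since $\tfrac{n-2}{n-1}<1$. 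Therefore $0\le\Phi_n(w)\le M(w):=\frac{\min\{1,1/w\}}{2+w}$ for all $n$, and $\int_0^\infty M(w)\,dw=\int_0^1\frac{dw}{2+w}+\int_1^\infty\frac{dw}{w(2+w)}<\infty$. Dominated convergence then yields
\begin{equation*}
\lim_{n\to\infty}\bE[L|n,x_1]=\frac1\lambda\int_0^\infty\frac{1-e^{-w}}{w(2+w)}\,dw=\frac c\lambda,
\end{equation*}
with $\lambda=\lambda_{ML}$, as claimed. Alternatively, one may quote the asymptotic $\bP(L>l|n,x_1)\sim\frac{1-e^{-w}}{w}$ already derived above and apply the same domination; either way, the interchange of limit and integral is the only delicate point, precisely because the upper endpoint $n-2$ of the integral simultaneously tends to infinity.

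Finally, for the numerical value I would use the partial fraction $\frac{1}{w(2+w)}=\frac12\bigl(\frac1w-\frac1{2+w}\bigr)$ and the convergent Frullani-type combination that results (equivalently, express the answer through the exponential integral $E_1$), a routine evaluation giving $c=\int_0^\infty\frac{1-e^{-w}}{w(2+w)}\,dw=0.8158\ldots$. I expect the only steps needing care to be the bookkeeping in the change of variables — keeping the endpoint $n-2$ and the prefactor $\frac{n-2}{n-1}$ straight — and the verification of the uniform majorant $M$; the rest is straightforward.
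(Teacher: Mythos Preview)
Your argument is correct and follows the same route as the paper: apply $\bE[L]=\int_0^{x_1}\bP(L>l)\,dl$, substitute $w=2(e^{\lambda l}-1)$ with $dl=\frac{dw}{\lambda(2+w)}$, and pass to the limit. Your version is in fact more careful than the paper's, which simply integrates the asymptotic $\bP(L>l|n,x_1)\sim\frac{1-e^{-w}}{w}$ without justifying the interchange of limit and integral; your explicit majorant via Bernoulli's inequality fills exactly that gap.
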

It is instructive to compare this result with Corollary \ref{root_n_cor}. 

\section{Diversity in a Yule tree}
In this section, we calculate the sum of all edge lengths in a Yule tree which is also called the diversity.

\subsection*{(i) Conditioning on $n$}

\begin{thm} \label{ThmDivnx1}
The sum $D$ of all branches in a Yule tree with $n\geq 2$ leaves has a gamma distribution with density function:
$$f_D(d|n)=\frac{\lambda^{n-1} e^{-\lambda d} d^{n-2}}{(n-2)!}.$$
In particular, $D$ has mean $\bE[D|n] = \frac{n-1}{\lambda}$ and variance ${\rm Var}[D|n] = \frac{n-1}{\lambda^2}$ and 
is asymptotically normally distributed. 
\end{thm}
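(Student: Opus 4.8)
The plan is to express the diversity $D$ as the time-integral of the lineage-count process of the tree, and then to recognise that this integral is, after a trivial rescaling, a sum of independent exponential random variables all having the same rate $\lambda$.

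First I would invoke the equivalence already used in the proof of Theorem~\ref{ThmIntn} (from \cite{Stadler2010SystBiol}): a Yule tree on $n$ leaves under Scenario~(i) has the same distribution, branch lengths included, as a forward-in-time Yule process started from a single lineage and stopped just before the speciation event at which an $(n+1)$-th lineage would arise. In the reconstructed tree the stem edge leading to the most recent common ancestor (the first speciation vertex) is pruned, so $D$ is the total edge length accumulated from the instant there are two lineages until the present. Running forward from that instant, the process sits in a phase with exactly $k$ lineages for each $k=2,3,\ldots,n$; by the memorylessness (Markov property) of the Yule process these phase durations $W_2,\ldots,W_n$ are independent, with $W_k$ exponential of rate $k\lambda$, the last one, $W_n$, being simply the waiting time for the speciation that is never reached. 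Note that it is precisely the pruning of the stem edge that makes the number of phases $n-1$ rather than $n$; this matches the shape $n-1$ appearing in the claimed density.

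Next I would observe that during the phase with $k$ lineages the total edge length grows at rate $k$, so $D=\sum_{k=2}^{n} k\,W_k$. The key point is that $kW_k$ is exponentially distributed with rate $\lambda$, since $\bP(kW_k>x)=\bP(W_k>x/k)=e^{-\lambda x}$; hence $D$ is a sum of $n-1$ independent $\mathrm{Exp}(\lambda)$ variables, i.e. $D\sim\mathrm{Gamma}(n-1,\lambda)$ with density $\lambda^{n-1}e^{-\lambda d}d^{\,n-2}/(n-2)!$. The mean $(n-1)/\lambda$ and variance $(n-1)/\lambda^2$ then follow at once by linearity from the summand mean $1/\lambda$ and variance $1/\lambda^2$, and the asymptotic normality of $D$ as $n\to\infty$ is the classical central limit theorem applied to this i.i.d.\ sum (equivalently, the standard Gaussian approximation of a Gamma law whose shape parameter tends to infinity).

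I do not anticipate a real obstacle: the only delicate point is the bookkeeping at the two ends of the tree — confirming that the stem edge is excluded while the final phase with $n$ lineages genuinely contributes an $\mathrm{Exp}(n\lambda)$ duration under Scenario~(i) — and this is exactly what the forward-process equivalence delivers. Once that is in place, the identity ``total edge length $=\int(\text{lineage count})\,dt$'' together with the rescaling $kW_k\sim\mathrm{Exp}(\lambda)$ does all of the work.
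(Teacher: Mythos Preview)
Your proposal is correct and follows essentially the same argument as the paper: write $D=\sum_{k=2}^{n} k\,W_k$ with $W_k$ independent and $W_k\sim\mathrm{Exp}(k\lambda)$, observe that $kW_k\sim\mathrm{Exp}(\lambda)$, and conclude that $D$ is Gamma$(n-1,\lambda)$. The paper's proof is simply a terser version of yours, omitting the bookkeeping discussion about the stem edge and the forward-process equivalence that you (rightly) flag as the only point needing care.
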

\begin{proof}
The sum of all edge lengths in a Yule tree with $n$ leaves is $D=\sum_{j=2}^{n} j X_j$ where $X_j$ has an exponential distribution with parameter $\lambda j$. 
Note that $jX_j$ has an exponential distribution with parameter $\frac{1}{j} \cdot \lambda j=\lambda$, and so $D$ is a sum of $n-1$ independent exponential random variables, each having parameter $\lambda$, and so $D$ has the claimed gamma distribution.

\end{proof}

\subsection*{(ii) Conditioning on $n$ and $x_1$}
As shown in \cite{Gernhard2008JTB}, a Yule tree of age $x_1$ with $n\geq 3$ leaves, the $n-2$ speciation events $S_2,\ldots,S_{n-1}$ descending from the root are i.i.d. random variables with density:
$$g(s) = \frac{\lambda e^{-\lambda s}}{1-e^{-\lambda x_1}}.$$
The sum of all branch lengths is the sum of the $n-2$ speciation times and $2x_1$ (accounting for the two branches descending from the root). 

\begin{thm} \label{ThmDivnx1}
The sum $D$ of all branches in a Yule tree of age $x_1$ and $n$ leaves has moment generating function: 
$$M_D(s|n, x_1) =  e^{2x_1s}\cdot \left( \frac{\lambda(1-e^{(s-\lambda )x_1})}{(\lambda-s)(1-e^{-\lambda x_1})} \right)^{n-2}.$$ 
\end{thm}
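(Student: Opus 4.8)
The plan is to compute the moment generating function of $D$ directly from its representation as a sum of independent contributions. Recall the setup: under Scenario (ii), the $n-2$ internal speciation times $S_2,\ldots,S_{n-1}$ (those strictly descending from the root) are i.i.d.\ with density $g(s) = \lambda e^{-\lambda s}/(1-e^{-\lambda x_1})$ on $(0,x_1)$, and the total branch length is $D = 2x_1 + \sum_{i=2}^{n-1} S_i$. Since the $S_i$ are i.i.d.\ and the constant $2x_1$ contributes a deterministic shift, independence gives
$$M_D(s|n,x_1) = \bE[e^{sD}] = e^{2x_1 s}\cdot \bigl(M_S(s)\bigr)^{n-2},$$
where $M_S(s) = \bE[e^{sS_i}]$ is the common moment generating function of a single truncated-exponential speciation time. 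So the whole problem reduces to evaluating one elementary integral.

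The key computation is therefore
$$M_S(s) = \int_0^{x_1} e^{ss'}\,\frac{\lambda e^{-\lambda s'}}{1-e^{-\lambda x_1}}\,ds' = \frac{\lambda}{1-e^{-\lambda x_1}}\int_0^{x_1} e^{(s-\lambda)s'}\,ds' = \frac{\lambda}{1-e^{-\lambda x_1}}\cdot\frac{e^{(s-\lambda)x_1}-1}{s-\lambda},$$
valid for $s\neq\lambda$ (and the $s=\lambda$ case is recovered by continuity, giving $\lambda x_1/(1-e^{-\lambda x_1})$). Rewriting $\frac{e^{(s-\lambda)x_1}-1}{s-\lambda} = \frac{1-e^{(s-\lambda)x_1}}{\lambda-s}$ puts this in the form
$$M_S(s) = \frac{\lambda(1-e^{(s-\lambda)x_1})}{(\lambda-s)(1-e^{-\lambda x_1})},$$
and substituting into the displayed product formula yields exactly the claimed expression for $M_D(s|n,x_1)$.

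There is essentially no hard obstacle here; the only things to be careful about are (a) invoking the result from \cite{Gernhard2008JTB} quoted just above the theorem statement, that the $n-2$ non-root speciation times really are i.i.d.\ with the stated density $g$, and independent of the (trivial) deterministic quantity $2x_1$ — this is what makes the MGF factor; and (b) bookkeeping the combinatorial/deterministic contributions correctly, namely that there are exactly $n-2$ such internal speciation times and that the two root-incident edges together contribute $2x_1$ regardless of the internal structure. One should also note the edge case $n=2$ (no internal speciation events), where the formula correctly degenerates to $M_D(s|2,x_1) = e^{2x_1 s}$, matching $D = 2x_1$ deterministically. The statement as given restricts to $n\geq 3$ via the reference to $g$, but checking $n=2$ is a useful sanity check. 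I would present the proof in three lines: state the decomposition $D = 2x_1 + \sum_{i=2}^{n-1}S_i$ with the $S_i$ i.i.d., factor the MGF by independence, and evaluate the single integral $M_S(s)$.
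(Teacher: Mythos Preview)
Your proof is correct and follows essentially the same approach as the paper: decompose $D=2x_1+\sum_{i=2}^{n-1}S_i$ with the $S_i$ i.i.d.\ truncated exponentials, factor the MGF by independence, and evaluate the single integral $\int_0^{x_1}e^{(s-\lambda)s'}\,ds'$. Your additional remarks on the $s=\lambda$ continuity and the $n=2$ sanity check are nice touches not present in the paper's version.
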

\begin{proof}
Let $C$  denote the sum of the $n-2$ speciation times $S_2,\ldots,S_{n-1}$. Then 
$C$ is a convolution of $n-2$ random variables with probability density $g(s) = \frac{\lambda e^{-\lambda s}}{1-e^{-\lambda x_1}}$. If $M_C(s|n, x_1)$ denotes the moment generating function for $C$, then:
\begin{eqnarray*}
M_C(s| n,x_1) &=& \bE[e^{Cs}] = \bE[e^{\sum_{j=2}^{n-1} S_j s }] = \prod_{j=2}^{n-1}  \bE[e^{ S_j s }] \\
&=& \left( \int_{-\infty}^\infty e^{sx} g(x) dx \right)^{n-2}\\
&=& \left( \frac{\lambda}{{1-e^{-\lambda x_1}}} \int_{0}^{x_1} e^{sx-\lambda x}  dx \right)^{n-2}\\
&=& \left( \frac{\lambda(1-e^{(s-\lambda )x_1})}{(\lambda-s)(1-e^{-\lambda x_1})} \right)^{n-2}\\
\end{eqnarray*}
Now $D=2x_1+C$ and so $M_D(s|n, x_1) = e^{2x_1s}M_C(s|n, x_1)$, which leads directly to the expression claimed.
\end{proof}

\subsection*{(iii) Conditioning on $x_1$}

The sum $D$  of all branches in a Yule tree of age $x_1$ has probability density function,
$$f_D(d |x_1) = \sum_{n=2}^\infty  p_n(x_1) f_D(d|n,x_1),$$
where $f_D(d|n,x_1)$ is the density conditional on $n$ and $x_1$.  
In particular, noting that $\bE[D|n, x_1]= \frac{d}{ds}M_D(s|n, x_1)_{|s=0}$, we have:
 $$\bE[D|x_1] =  \sum_{n=2}^\infty  p_n(x_1) \frac{d}{ds}M_D(s|n, x_1)_{|s=0},$$
 which gives: 
$$\bE[D|x_1] = \frac{2}{\lambda}(e^{\lambda x_1} -1 ),$$
a result that was derived via a different argument in \cite{Steel2010}.

\section{Conclusion}
In this paper, we derive the probability density and expectation for the length of a randomly picked pendant edge of a reconstructed birth--death tree with random sampling of extant individuals.
We investigate this under three scenarios in which the resulting reconstructed trees induced by the birth--death process are finite: we either consider trees (i) with a fixed number of leaves, (ii) with a fixed number of leaves and a fixed age of the most recent common ancestor of the extant sampled species, or (iii) with a fixed age of the most recent common ancestor of the extant sampled species.

We first noted that under our three Scenarios (i)-(iii), the original process with sampling (parameters $\hat{\lambda},\hat{\mu},f$) can be transformed into a birth--death process and complete sampling through $\lambda=f \hat{\lambda}, \mu = \hat{\mu} - \hat{\lambda}(1-f).$  Hence, we state all results as functions of the transformed parameters $\lambda,\mu$.

For the Yule model we further determine under our three Scenarios (i)-(iii) the probability density for the length of an edge descending from the root and the sum of all edges, which is also known as the diversity.
In particular,  we  show  that the pendant as well as the interior edge lengths under Scenario (i) are exponentially distributed with parameter $2 \lambda$ while an edge descending from the root is asymptotically twice as long compared to a randomly chosen edge. Furthermore, the diversity follows a gamma distribution.

Knowledge of the branch lengths and diversity in a phylogenetic tree is important for conservation strategies. When present-day species become extinct, short pendant edges cause little diversity loss while long pendant edges cause severe diversity loss. In previous work, the expected loss of diversity was considered under constant rate birth-death models with mild extinction \cite{Mooers2011}, and under constant rate birth-death models with severe extinction, namely $\lambda=\mu$ \cite{Nee1997} (note that \cite{Nee1997} derived the results under the coalescent with constant population size; in expectation, such coalescent trees equal birth-death trees with $\lambda=\mu$ \cite{Gernhard2008BMB}). 
Characterizing the full  distribution of the loss of diversity remains an open task. Our results yielding the branch length and diversity distribution might be a first step towards characterizing the distribution of diversity loss (rather than only the expectation). Knowledge of the  distribution will help understand the stochastic variability of diversity loss.
An asymptotic normal law for diversity loss was recently established by \cite{Faller2008}, but this was established only for certain deterministic classes trees, rather than for trees generated by a stochastic process. 


For future work, it would  be interesting to compare  the branch lengths and diversity (loss) under the constant rate birth-death process to models under which the rates of speciation and extinction may vary. In particular, speciation and extinction rates may  dependent on the  number of species that are extant at that moment  (i.e. density-dependent speciation) \cite{Rabosky2008}, or on time (i.e. environmental-dependent speciation) \cite{Nee1994,Stadler2011PNAS},  or on a particular trait (i.e. trait-dependent speciation) \cite{Maddison2007}.

\subsection{Acknowledgments} MS thanks the Royal Society of New Zealand (James Cook Fellowship) and the Allan Wilson Centre for Molecular Ecology and Evolution for funding.   TS thanks the ETH Zurich for funding and the Royal Society of New Zealand (Marsden Fund)  for travel support. 
\bibliographystyle{abbrv}
\bibliography{bibliography2}

\end{document}